\DeclareMathOperator{\ex}{ex}
\def\P{\mathcal{P}}
\newcommand{\hm}[1]{\leavevmode{\marginpar{\tiny%
$\hbox to 0mm{\hspace*{-0.5mm}$\leftarrow$\hss}%
\vcenter{\vrule depth 0.1mm height 0.1mm width \the\marginparwidth}%
\hbox to 0mm{\hss$\rightarrow$\hspace*{-0.5mm}}$\\\relax\raggedright #1}}}
\newtheorem{theorem}{Theorem}[section]
\newtheorem{question}{Question}
\newtheorem{lemma}[theorem]{Lemma}
\newtheorem{corollary}[theorem]{Corollary}
\newtheorem{proposition}[theorem]{Proposition}
\newtheorem{claim}[theorem]{Claim}
\theoremstyle{definition}
\newcommand\eps{\varepsilon}
\def\({\left(}
\def\){\right)}
\def\P{\mathcal{P}_{k,s}}
\definecolor{red}{rgb}{1, 0, 0}
\def\epsilon{\varepsilon}
\begin{document}
\overfullrule=5pt

\title{An extension of the rainbow Erd\H{o}s-Rothschild problem}

\author[C. Hoppen]{Carlos Hoppen}
\address{Instituto de Matem\'atica e Estat\'{i}stica, UFRGS -- Avenida Bento Gon\c{c}alves, 9500, 91501--970 Porto Alegre, RS, Brazil}
\email{choppen@ufrgs.br}

\author[H. Lefmann]{Hanno Lefmann}
\address{Fakult\"at f\"ur Informatik, Technische Universit\"at Chemnitz,
Stra\ss{}e der Nationen 62, 09111 Chemnitz, Germany}
\email{Lefmann@Informatik.TU-Chemnitz.de}

\author[D. Nolibos]{Denilson Nolibos}
\address{Instituto de Matem\'atica e Estat\'{i}stica, UFRGS -- Avenida Bento Gon\c{c}alves, 9500, 91501--970 Porto Alegre, RS, Brazil}
\email{denilsonnolibos@gmail.com}

\thanks{This work was partially supported by CAPES and DAAD via Probral (CAPES Proc.~88881.143993/2017-01 and DAAD~57391132). The first author acknowledges the support of CNPq~308054/2018-0), Conselho Nacional de Desenvolvimento Cient\'{i}fico e Tecnol\'{o}gico.}

\begin{abstract}
Given integers $r \geq 2$, $k \geq 3$ and $2 \leq s \leq \binom{k}{2}$, and a graph $G$, we consider $r$-edge-colorings of $G$ with no copy of a complete graph $K_k$ on $k$ vertices where $s$ or more colors appear, which are called $\P$-free $r$-colorings. We show that, for large $n$ and $r \geq r_0(k,s)$, the $(k-1)$-partite Tur\'an graph $T_{k-1}(n)$ on $n$ vertices yields the largest number of $\P$-free $r$-colorings among all $n$-vertex graphs, and that it is the unique graph with this property.
\end{abstract}

\maketitle

\section{Introduction}

Given a fixed graph $F$, the well-known \emph{Tur\'{a}n problem} for $F$ is concerned with the maximum number $\ex(n,F)$ of edges over all $F$-free $n$-vertex graphs, namely over all $n$-vertex graphs that do not contain $F$ as a subgraph. The graphs that achieve this maximum are called \emph{$F$-extremal}. When $F=K_{k}$ is the complete graph on $k \geq 3$ vertices, the unique $F$-extremal graph on $n$ vertices is the balanced, complete, $(k-1)$-partite graph $T_{k-1}(n)$, known as the \emph{Tur\'{a}n graph} for $K_k$~\cite{turan}. For general graphs (and hypergraphs) $F$, determining $\ex(n,F)$ and the corresponding extremal graphs is a very important problem and there is a vast literature related with it (more information may be found in F\"{u}redi and Simonovits~\cite{FS2014}, and in the references therein). 

An $r$-coloring of a graph $G$ is a function $f \colon E(G) \longrightarrow [r]$ that associates a color in $[r]=\{1,\ldots,r\}$ with each edge of $G$. Erd\H{o}s and Rothschild~\cite{Erd74} were interested in $n$-vertex graphs that admit the largest number of $r$-colorings such that \emph{every color class is $F$-free}. In particular, they conjectured that, for all $n \geq n_0(k)$, the number of $K_k$-free 2-colorings is maximized by the Tur\'{a}n graph $T_{k-1}(n)$. Note that $F$-extremal graphs are natural candidates for maximality, as their edges may be colored arbitrarily, which leads to $r^{\ex(n,F)}$ colorings. It is clear that the number of colorings might increase if we have more than $\ex(n,F)$ edges to color, but additional edges also produce copies of $F$, placing constraints on colorings of their edges. 

Regarding the Erd\H{o}s-Rothschild Conjecture, Yuster~\cite{yuster} gave an affirmative answer for $k=3$ and any $n \geq 6$, while Alon, Balogh, Keevash and Sudakov~\cite{ABKS} showed that, for $r \in \{2,3\}$ and $n \geq n_0$, where $n_0$ is a constant depending on $r$ and $k$, the Tur\'{a}n graph $T_{k-1}(n)$ is the unique optimal $n$-vertex graph for the number of $K_k$-free $r$-colorings. Recently, H\`{a}n and Jim\'{e}nez~\cite{HJ2018} obtained better bounds on $n_0$ using the Container Method. For $r \geq 4$, the answer is more complicated. Pikhurko and Yilma~\cite{PY12} found the graphs that admit the largest number of such colorings for $r=4$ and $k \in \{3,4\}$, which turn out to be balanced, complete, multipartite graphs that are not $K_k$-free. Botler et al.~\cite{Botleretal19} characterized the extremal graphs for $k=3$ and $r=6$, and they gave an approximate result for $k=3$ and $r=5$. Pikhurko, Staden and Yilma~\cite{PSY16} showed that at least one of the graphs with the largest number of colorings is complete multipartite.

Balogh~\cite{Bal06} was the first to consider $r$-colorings that avoid a copy of a graph $F$ colored in a non-monochromatic way. A similar problem was investigated by Hoppen and Lefmann~\cite{rbmx} and by~Benevides, Hoppen and Sampaio~\cite{BHS17}, who considered edge-colorings of a graph avoiding a copy of $F$ with a \emph{prescribed pattern}. Given a graph $F$, a \emph{pattern} $P$ of $F$ is a partition of its edge set. An edge-coloring of a graph $G$ is said to be \emph{$(F,P)$-free} if $G$ does not contain a copy of $F$ in which the partition of the edge set induced by the coloring is isomorphic to $P$. For instance, if the partition $P$ consists of a single class, $(F,P)$-free colorings avoid \emph{monochromatic} copies of $F$. On the other hand, if $P$ is the pattern where each edge of $F$ lies in a different class, $(F,P)$-free colorings avoid \emph{rainbow} copies of $F$. These colorings are known as \emph{Gallai colorings} when $F=K_3$.

Given the number of colors $r \geq 1$, a graph $F$ and a pattern $P$ of $F$, let $\mathcal{C}_{r,(F,P)}(G)$ be the set of all $(F,P)$-free $r$-colorings of a graph $G$. We write
$$c_{r,(F,P)}(n) = \max\left\{\, |\mathcal{C}_{r,(F,P)}(G)| \colon |V(G)| = n \, \right\},$$
and we say that an $n$-vertex graph $G$ is \emph{$(r,F,P)$-extremal} if $|\mathcal{C}_{r,(F,P)}(G)| = c_{r,(F,P)}(n)$. Most results about $c_{r,(F,P)}(n)$ involve monochromatic or rainbow patterns, more information may be found in~\cite{BL19,CGM20,linear,rainbow_triangle,rainbow_kn} and in the references therein. In particular, the work of~\cite{BHS17} implies that, for any such pattern, there is an extremal $(r,F,P)$-extremal graph that is complete multipartite. 

Here, we generalize this problem to colorings that avoid a family of patterns. Let $k \geq 3$, $r \geq 2$ and $s \leq \binom{k}{2}$ be positive integers. Given a graph $G$, we are interested in $r$-edge-colorings of $G$ with no copy of $K_k$ colored with $s$ or more colors, which are called $\P$-free $r$-colorings. Let $\mathcal{C}_{r,\P}(G)$ denote this set of $r$-colorings and let 
\begin{equation}\label{def1}
c_{r,\P}(n) = \max\left\{\, |\mathcal{C}_{r,\P}(G)| \colon |V(G)| = n \, \right\}.
\end{equation}

If $s=1$, finding the $n$-vertex graphs that achieve $c_{r,\mathcal{P}_{k,1}}(n)$ colorings is just a restatement of the Tur\'{a}n problem, as it is equivalent to finding a $K_k$-free $n$-vertex graph with the largest number of edges. If $s=\binom{k}{2}$, this is precisely the problem of finding $(r,K_k,P)$-extremal graphs, where $P$ is the rainbow pattern of $K_k$.

We note that previous results already give the full solution of this problem for $k=3$, at least for large $n$. Note that $1\leq s \leq 3$ in this case.  The case $s=1$ corresponds to the Tur\'an problem, so that $T_2(n)$ is the unique extremal configuration for all $n,r \geq 2$. In the case $s=2$, every triangle in a graph $G$ has to be monochromatic in a $\mathcal{P}_{3,2}$-free $r$-coloring of $G$. So, if any edge contained in a triangle is removed from $G$, the number of colorings does not decrease, which immediately implies that $c_{r,\mathcal{P}_{3,2}}(n)=\, |\mathcal{C}_{r,\mathcal{P}_{3,2}}(T_2(n))|$ for any $r \geq 2$ and $n \geq  2$. It is easy to show that $T_2(n)$ is the only $n$-vertex graph with this property\footnote{Analogously $c_{r,\mathcal{P}_{k,2}}(n)=\, |\mathcal{C}_{r,\mathcal{P}_{k,2}}(T_{k-1}(n))|$ for any $r \geq 2$ and $n \geq  2$.}. For $s=3$, Balogh and Li~\cite{BL19} proved that, for $n$ sufficiently large, the complete graph is the unique extremal configuration for $r \leq 3$ and $T_2(n)$ is the unique extremal configuration for $r \geq 4$. Bastos, Benevides and Han~\cite{BBH20} obtained related results and Hoppen, Lefmann and Odermann~\cite{rainbow_triangle} had previously established the extremality of $T_2(n)$ for $r \geq 5$).

The following states two easy facts about determining~\eqref{def1} and the $n$-vertex graphs that achieve extremality.
\begin{lemma}\label{lemma_simple}
Let $n \geq k \geq 3$, $s \leq \binom{k}{2}$ and $r \geq 2$ be integers. 
\begin{itemize}
\item[(a)] If $r<s$, then $c_{r,\P}(n)= |\mathcal{C}_{r,\P}(K_n)|=r^{\binom{n}{2}}$.

\item[(b)] If $c_{r,\P}(n)= |\mathcal{C}_{r,\P}(T_{k-1}(n))|$ and $1 \leq s'<s$, then 
$$c_{r,\mathcal{P}_{k,s'}}(n)= |\mathcal{C}_{r,\mathcal{P}_{k,s'}}(T_{k-1}(n))|.$$
\end{itemize}
\end{lemma}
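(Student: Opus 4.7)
The plan is to prove both parts by elementary counting arguments built on two simple observations: the class $\mathcal{C}_{r,\mathcal{P}_{k,s}}(G)$ is monotone in $s$, and the Tur\'an graph $T_{k-1}(n)$ is $K_k$-free, so each of its $r$-colorings is automatically $\mathcal{P}_{k,s}$-free for every $s$.

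For part (a), I would observe that an $r$-coloring of any graph $G$ can use at most $r$ distinct colors on any subgraph. If $r<s$, then no copy of $K_k$ in $G$ can receive $s$ or more colors, so every $r$-edge-coloring of $G$ is $\mathcal{P}_{k,s}$-free. Hence $|\mathcal{C}_{r,\mathcal{P}_{k,s}}(G)|=r^{|E(G)|}$, which is maximized when $G=K_n$, giving $c_{r,\mathcal{P}_{k,s}}(n)=r^{\binom{n}{2}}$.

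For part (b), I would first note the containment $\mathcal{C}_{r,\mathcal{P}_{k,s'}}(G)\subseteq\mathcal{C}_{r,\mathcal{P}_{k,s}}(G)$ valid for every graph $G$: if a coloring has no copy of $K_k$ with at least $s'$ colors, then in particular it has none with at least $s>s'$ colors. Second, since $T_{k-1}(n)$ contains no copy of $K_k$, every one of its $r^{e(T_{k-1}(n))}$ colorings is vacuously $\mathcal{P}_{k,t}$-free for each $t\geq 1$, so
\[|\mathcal{C}_{r,\mathcal{P}_{k,s}}(T_{k-1}(n))|=r^{e(T_{k-1}(n))}=|\mathcal{C}_{r,\mathcal{P}_{k,s'}}(T_{k-1}(n))|.\]
Combining these with the hypothesis $c_{r,\mathcal{P}_{k,s}}(n)=|\mathcal{C}_{r,\mathcal{P}_{k,s}}(T_{k-1}(n))|$, I get, for any $n$-vertex graph $G$,
\[|\mathcal{C}_{r,\mathcal{P}_{k,s'}}(G)|\leq|\mathcal{C}_{r,\mathcal{P}_{k,s}}(G)|\leq|\mathcal{C}_{r,\mathcal{P}_{k,s}}(T_{k-1}(n))|=|\mathcal{C}_{r,\mathcal{P}_{k,s'}}(T_{k-1}(n))|,\]
which yields the claimed equality.

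There is no real obstacle here; the whole argument is a chain of trivial inequalities. The only point one must verify cleanly is that a $K_k$-free host graph trivializes the pattern constraint, so that for $T_{k-1}(n)$ the family $\mathcal{C}_{r,\mathcal{P}_{k,t}}$ is independent of $t$; once this is noted, both parts are one-line consequences of the monotonicity in $s$.
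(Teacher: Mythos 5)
Your proof is correct and follows essentially the same route as the paper: part (a) by the observation that $r<s$ colors cannot color any $K_k$ with $s$ or more colors, and part (b) by combining the containment $\mathcal{C}_{r,\mathcal{P}_{k,s'}}(G)\subseteq\mathcal{C}_{r,\mathcal{P}_{k,s}}(G)$ with the fact that $T_{k-1}(n)$ is $K_k$-free, so its set of admissible colorings is independent of the pattern parameter. You simply spell out the two observations in slightly more detail than the paper does.
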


\begin{proof}
Part (a) is trivial, as no $r$-coloring can produce a copy of $K_k$ colored with $s$ or more colors if $r<s$. In part (b), the hypothesis tells us that for any $n$-vertex graph $G$
$$c_{r,\P}(n)=|\mathcal{C}_{r,\P}(T_{k-1}(n))|\geq |\mathcal{C}_{r,\P}(G)|.$$ 
The conclusion now follows from the fact that $\mathcal{C}_{r,\mathcal{P}_{k,s'}}(G) \subseteq \mathcal{C}_{r,\P}(G)$ for any graph $G$ and that $\mathcal{C}_{r,\P}(T_{k-1}(n))=\mathcal{C}_{r,\mathcal{P}_{k,s'}}(T_{k-1}(n))$.
\end{proof}

The work~\cite[Theorem 1.2]{rainbow_kn} shows that, given $k \geq 4$ and the rainbow pattern $P$ of $K_k$, there is $r_0$ such that $c_{r,(K_k,P)}(n)=|\mathcal{C}_{r,(K_k,P)}(T_{k-1}(n))| $ for all $r \geq r_0$ and $n \geq n_0(r,k)$. With Lemma~\ref{lemma_simple}~(b), we deduce that, for any $k \geq 4$ and $s \leq \binom{k}{2}$, there is $r_0$ such that $c_{r,\P}(n)= |\mathcal{C}_{r,\P}(T_{k-1}(n))|$ for all $r \geq r_0$ and $n \geq n_0(r,k,s)$. However, this value of $r_0$  is superexponential in $k$, and the authors of~\cite{rainbow_kn} believed that this result should hold for much smaller values of $r_0$. 

By addressing a more general problem, we are able to obtain much better bounds for smaller values of $s$; moreover, our results lead to better bounds on $r_0$ in the case $s= \binom{k}{2}$, i.e., when only rainbow copies of $K_k$ are avoided. The main result in this paper is the following. For simplicity, we write it in terms of functions that will be defined in the next subsection.
\begin{theorem}\label{main_thm}
Let $k \geq 4$ and $ 2 \leq s \leq \binom{k}{2}$ be integers. Fix $r \geq r_{0}(k,s)$, defined in~\eqref{def_r0},~\eqref{def_r1} and~\eqref{def_r2} below for $s \leq s_0(k)$, $s_0(k)<s\leq s_1(k)$ and $s>s_1(k)$, respectively. There is $n_{0}=n_0(r,k,s)$ for which the following holds. Every graph $G=(V,E)$ on $n > n_{0}$ vertices satisfies 
$$|\mathcal{C}_{r,\P}(G)| \leq r^{\ex(n,K_{k})}.$$ 
Moreover, equality holds if and only if $G$ is isomorphic to $T_{k-1}( n)$.
\end{theorem}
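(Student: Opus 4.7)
Since $T_{k-1}(n)$ is $K_k$-free, every $r$-edge-coloring of $T_{k-1}(n)$ is automatically $\P$-free, so $|\mathcal{C}_{r,\P}(T_{k-1}(n))| = r^{\ex(n,K_k)}$. The plan is to show that $T_{k-1}(n)$ is the \emph{unique} $n$-vertex graph realizing this bound by combining an approximate stability result with a sharp local analysis. For the approximate stability step, I would show that any $n$-vertex graph $G$ satisfying $|\mathcal{C}_{r,\P}(G)| \geq r^{\ex(n,K_k)}$ must lie within $o(n^2)$ edit distance of $T_{k-1}(n)$. The natural tool is the hypergraph container method applied to the auxiliary hypergraph with vertex set $E(K_n) \times [r]$ whose hyperedges are the forbidden $K_k$-color configurations (copies of $K_k$ together with color assignments using at least $s$ colors); $\P$-free colorings then correspond to independent sets. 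The container lemma yields a small family of containers, each of which, by Tur\'{a}n-type supersaturation, has edge set within $o(n^2)$ edits of a complete $(k-1)$-partite structure, and counting colorings container-by-container together with the classical stability theorem for $K_k$ yields the claim.

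For the sharp finish, fix the $(k-1)$-partition $V(G) = V_1 \cup \cdots \cup V_{k-1}$ maximizing the number of cross-edges of $G$, and let $B$ denote the set of $G$-edges with both endpoints in a common part. If $B = \emptyset$ then $G$ is complete $(k-1)$-partite (hence $K_k$-free), so $|\mathcal{C}_{r,\P}(G)| = r^{e(G)}$, which is maximized uniquely by $G = T_{k-1}(n)$. Otherwise each bad edge $e = uv \in B$ with $u,v \in V_i$ lies in $\Theta(n^{k-2})$ copies of $K_k$, each of which must be colored using at most $s-1$ colors. Conditioning on the colors of the bad edges and bounding the number of compatible extensions to the remaining edges yields an upper bound of the form $|\mathcal{C}_{r,\P}(G)| \leq r^{e(G) - \Omega(|B|\cdot n^{k-2})}$, where the implicit constant involves $N(k,s,r)/r^{\binom{k}{2}}$ and $N(k,s,r)$ denotes the number of $r$-colorings of $K_k$ using fewer than $s$ colors. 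Since $e(G) - |B|$ does not exceed $\ex(n,K_k)$, this is strictly less than $r^{\ex(n,K_k)}$ whenever $|B| \geq 1$ and $r \geq r_0(k,s)$, forcing $B = \emptyset$ and hence $G = T_{k-1}(n)$.

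The main obstacle is obtaining a threshold $r_0(k,s)$ as small as the theorem claims. The ratio $N(k,s,r)/r^{\binom{k}{2}}$ admits qualitatively different asymptotic expansions depending on how $s$ compares with $\binom{k}{2}$: for small $s$ (up to $s_0(k)$), non-$\P$-free colorings of $K_k$ are dominated by nearly-monochromatic ones and a simple bound suffices; for intermediate $s$ (up to $s_1(k)$), a finer binomial estimate is required; and for $s$ close to the rainbow value $\binom{k}{2}$, one must use the refined counting developed for rainbow $K_k$ in~\cite{rainbow_kn}. The three definitions of $r_0(k,s)$ in~\eqref{def_r0},~\eqref{def_r1} and~\eqref{def_r2} correspond precisely to these three regimes. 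A further subtlety is uniqueness: the inequalities controlling $r_0(k,s)$ must hold \emph{strictly}, so that any deviation from $T_{k-1}(n)$ strictly decreases the coloring count.
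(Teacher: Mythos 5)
Your sharp-finish step contains a genuine error. The claimed bound $|\mathcal{C}_{r,\P}(G)| \leq r^{\,e(G) - \Omega(|B|\cdot n^{k-2})}$ cannot hold: the $r$ monochromatic colorings of any graph are always $\P$-free (since $s \geq 2$), so $|\mathcal{C}_{r,\P}(G)| \geq r$, yet once $|B|\,n^{k-2}$ exceeds $e(G) = O(n^2)$ — which happens already for $|B|\geq 1$ when $k \geq 5$, and under the stability regime $|B| = \delta n^2$ for any $k\geq 4$ — the claimed bound drops below $r$. The underlying fallacy is treating the $\Theta(n^{k-2})$ copies of $K_k$ through a fixed bad edge as if they gave independent multiplicative losses; these copies share the bad edge and heavily overlap on the cross-edges, so the constraints do not compose that way. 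The correct savings in the exponent is $\Theta(n^2)$ and is essentially independent of $|B|$; the paper obtains it by exhibiting, for every coloring, a single \emph{rare} color on a dense pair $X_i \times X_j$ and then applying the entropy bound (see the argument around Lemma~\ref{abund} and display~\eqref{eq_case1}). Your argument also does not accommodate the case distinction the paper needs: when $s > \binom{k-1}{2} + 1$ there are not enough colors left to assign distinct ones to the edges of a greedily embedded $K_{k-1}$, and the paper switches to a vertex-deletion induction (Case 2) — a step with no analogue in your sketch.

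Beyond this gap, two structural differences from the paper are worth noting. First, the paper reduces to complete multipartite $G$ via Theorem~\ref{thm_atleast2} before running the stability and local analysis; this guarantees a bad edge inside a stability class is an actual edge of $G$ lying in many $K_k$'s, which is cleaner than working with an arbitrary near-extremal $G$. Second, you propose hypergraph containers for stability while the paper uses the colored Szemer\'edi regularity lemma. Containers could conceivably give approximate $(k-1)$-partiteness, but the auxiliary hypergraph on $E(K_n)\times[r]$ encoding $(K_k,\geq s)$-colored copies does not decompose into per-color classes as in the monochromatic Erd\H{o}s--Rothschild setting of H\`an--Jim\'enez, so the supersaturation and container steps need nontrivial adaptation; more importantly, the regularity approach in the paper produces the explicit linear constraints~\eqref{linear_constraints2} on the list-size distribution of the cluster graph, and it is precisely optimizing against those constraints that yields the small, nearly optimal threshold $r_0(k,s)$. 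A container-based stability step would at best recover some threshold, not the claimed one, so the quantitative heart of the theorem would be lost.
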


To prove Theorem~\ref{main_thm}, we use a stability method that relies on the following result.
\begin{theorem}\label{stability_thm}
Let $k \geq 4$ and $ 2 \leq s \leq \binom{k}{2}$ be integers. Fix $r \geq r_{0}(k,s)$, defined in~\eqref{def_r0},~\eqref{def_r1} and~\eqref{def_r2} below for $s \leq s_0(k)$, $s_0(k)<s\leq s_1(k)$ and $s>s_1(k)$, respectively. For any $\delta > 0$, there is $n_{0}=n_0(\delta,r,k,s)$ as follows. If $G=(V,E)$ is a graph on $n > n_{0}$ vertices such that $|\mathcal{C}_{r,\P}(G)| \geq r^{\ex(n,K_{k})}$, then there is a partition $V=W_{1}  \cup \cdots \cup W_{k-1}$ such that at most $\delta n^{2}$ edges have both endpoints in a same class $W_i$.
\end{theorem}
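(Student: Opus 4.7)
The plan is to apply Szemer\'edi's regularity lemma to $G$ and transfer the problem to the reduced graph. Fix auxiliary parameters $\varepsilon \ll d \ll \eta \ll \delta$ and apply the regularity lemma to obtain an equitable partition $V(G) = V_0 \cup V_1 \cup \cdots \cup V_m$ with $|V_0| \leq \varepsilon n$, $|V_1|=\cdots=|V_m|$, and all but at most $\varepsilon\binom{m}{2}$ pairs $\varepsilon$-regular. Build the reduced graph $R$ on $[m]$ by placing $ij \in E(R)$ whenever $(V_i,V_j)$ is $\varepsilon$-regular of density at least $d$. A routine accounting shows that all but $O((\varepsilon+d)n^2)$ edges of $G$ lie in pairs indexed by edges of $R$.

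The structural step associates to each $\P$-free coloring $\chi$ a \emph{color profile} $\Phi_\chi \colon E(R) \to 2^{[r]}$ defined by
\[
\Phi_\chi(ij) = \bigl\{c \in [r] : |\chi^{-1}(c) \cap E(V_i,V_j)| \geq \eta|V_i||V_j|\bigr\}.
\]
The multicolored counting lemma implies that whenever $V_{i_1},\ldots,V_{i_k}$ induces a $K_k$ in $R$, for every choice $c_{ab} \in \Phi_\chi(i_a i_b)$ with $1\leq a<b\leq k$ there are $\Omega(n^k)$ copies of $K_k$ in $G$ whose edge between $V_{i_a}$ and $V_{i_b}$ has color $c_{ab}$. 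Since $\chi$ is $\P$-free, every such selection must use at most $s-1$ distinct colors. A Tur\'an-type extremal argument on color systems then shows that the restriction of $\Phi_\chi$ to the edges of any such $K_k$ satisfies
\[
\prod_{1 \leq a<b\leq k} |\Phi_\chi(i_a i_b)| \leq C(k,s)
\]
for an explicit constant $C(k,s)$ depending only on $k$ and $s$ (essentially $(s-1)^{\binom{k}{2}}$ after a suitable reformulation).

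In the counting step, one bounds the number of $\P$-free colorings consistent with a fixed profile $\Phi$. Edges outside the reduced graph, together with at most $r\eta|V_i||V_j|$ exceptional edges per regular pair, may be colored freely in $r$ ways; the remaining edges inside each $ij \in E(R)$ must receive a color from $\Phi(ij)$. This yields the per-profile estimate $r^{O((\varepsilon+d+r\eta)n^2)}\cdot \prod_{ij\in E(R)}|\Phi(ij)|^{|V_i||V_j|}$. Summing over the at most $2^{r\binom{m}{2}}$ possible profiles and invoking the $K_k$-constraint above produces
\[
|\mathcal{C}_{r,\P}(G)| \leq r^{O((\varepsilon+d+r\eta)n^2)} \cdot \max_{\Phi} \prod_{ij \in E(R)} |\Phi(ij)|^{(n/m)^2},
\]
where the maximum ranges over profiles satisfying the product constraint on each copy of $K_k$ in $R$.

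To conclude, suppose toward a contradiction that no $(k-1)$-partition of $V(G)$ has fewer than $\delta n^2$ intra-class edges. A standard transfer argument then shows that $R$ is at edit distance $\Omega(m^2)$ from being $(k-1)$-partite, so by supersaturation (Erd\H{o}s--Simonovits) $R$ contains $\Omega(m^k)$ copies of $K_k$. Applying the per-$K_k$ product bound to each of these copies yields a savings of a factor roughly $r^{\binom{k}{2}}/C(k,s)$ per copy, driving the overall estimate strictly below $r^{\ex(n,K_k)}$ provided $r \geq r_0(k,s)$, which contradicts the hypothesis. The main obstacle is calibrating $r_0(k,s)$ precisely: the per-copy color savings must outweigh the potential ``edge gain'' $G$ could have over $T_{k-1}(n)$, and the optimization behaves qualitatively differently in the three regimes $s \leq s_0(k)$, $s_0(k) < s \leq s_1(k)$, and $s > s_1(k)$, producing the three formulas~\eqref{def_r0},~\eqref{def_r1},~\eqref{def_r2} for $r_0$.
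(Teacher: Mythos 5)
Your high-level setup --- regularity lemma, an auxiliary graph on the clusters carrying color lists, an embedding/counting lemma, then a per-cluster-graph counting estimate --- matches the paper's strategy, but the key structural lemma you assert in the ``structural step'' is false, and this is a genuine gap. You claim that for every copy of $K_k$ in $R$ one has $\prod_{a<b}|\Phi_\chi(i_a i_b)| \leq C(k,s)$ with $C$ depending only on $k$ and $s$. That does not follow from ``every selection of colors uses at most $s-1$ distinct colors.'' Counterexample: give one edge the full list $[r]$ and every other edge the list $\{1\}$. Then every selection uses at most $2 \leq s-1$ colors (for $s\geq 3$), yet the product is $r$, unbounded in $r$. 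More generally, edges with lists of size $\geq s$ (the ``Tur\'an part'' of $H$) can push the per-$K_k$ product arbitrarily high without creating a forbidden pattern, so the per-copy savings of $r^{\binom{k}{2}}/C(k,s)$ that your supersaturation step relies on does not exist. Even with a corrected local constraint, aggregating constraints over $\Omega(m^k)$ overlapping copies of $K_k$ into a bound on $\prod_{ij\in E(R)}|\Phi(ij)|$ is not a routine deduction: each edge lies in many copies, so you cannot simply multiply per-copy gains.

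The paper instead proves a \emph{global} structural claim (Claim~3.1): Tur\'an's theorem applied to the subgraph of edges with list size $\geq s$ gives $e_s(H)+\cdots+e_r(H)\leq \ex(m,K_k)$, and a greedy argument plus Lemma~2.2 on max-cut gives, for each admissible $i$, the inequality $\tfrac{k-i-1}{k-i}(e_{s-A(k,k-i)}(H)+\cdots+e_{s-1}(H))+e_s(H)+\cdots+e_r(H)\leq\ex(m,K_k)$. These are linear constraints on the edge-counts $e_j(H)$ by list size, and the product $\prod_j j^{e_j(H)}$ is then bounded by solving an explicit linear program --- that is where $\beta(H)$ and, eventually, the three regimes for $r_0(k,s)$ come from. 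Two further ingredients you omit are also essential: F\"uredi's stability theorem (Theorem~2.6) to produce the $(k-1)$-partition of $H''$ from a near-extremal edge count, and Lemma~2.3 (the Alon--Yuster-type embedding, with Corollary~2.5 as the $p=2,j=1$ case) to bound $e_1(H)$ and, in the regimes $s>s_0(k)$, to bound $e_2(H)+\cdots+e_{s-A(k,2)-1}(H)$. Without these, one cannot convert ``small $\beta(H)$'' into the required vertex partition of $G$. You should replace the per-$K_k$ product bound by the global edge-count constraints of Claim~3.1 and carry out the linear-programming optimization.
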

It turns out that the value of $r_0(k,s)$ in the above statements is needed in our proof of Theorem~\ref{stability_thm}, and we do not believe that it is best possible. However, our results give much better dependency on $s$ and $k$ than the bound in~\cite{rainbow_kn}. Indeed, if $s \leq s_0(k)$, the value given for $r_0(k,s)$ is less than $(s-1)^2$. Moreover, if $s \leq s_1(k)$, the quantity $r_0(k,s)$ is less than $(s-1)^7$.

\subsection{The functions \mathversion{bold}$r_0(k,s)$, $s_0(k)$ and $s_1(k)$\mathversion{normal} }

In order to specify the quantities given in the statement of Theorems~\ref{main_thm} and~\ref{stability_thm}, we shall define some additional functions. For $j \in \{2, \ldots, k-1\}$, let
\begin{equation}\label{def_A}
A(k,j)= \binom{k}{2}-\ex(k, K_{j+1}) = \binom{\lfloor k/j \rfloor}{2} \left( \left\lfloor \dfrac{k}{j} \right\rfloor j +j - k \right) + \binom{\lceil k/j \rceil}{2} \left( k- \left\lfloor \dfrac{k}{j} \right\rfloor j  \right),
\end{equation}
which, by Tur\'an's Theorem, is the minimum number of edges that must be deleted from a complete graph $K_k$ to make it $j$-partite.  Let 
\begin{eqnarray*}
s_0(k)&=&A(k,2)+2=\binom{k}{2}-\left\lfloor \frac{k}{2} \right\rfloor \cdot  \left\lceil \frac{k}{2} \right\rceil+2 \label{def_s}\\
s_1(k)&=&\binom{k}{2} - \left\lfloor \frac{k}{2} \right\rfloor +2. \label{def_s1}
\end{eqnarray*}

For $s \leq s_0(k)$, let $i^\ast$ be the least value of $i$ such that $A(k,k-i) \geq s-2$. As it turns out, we have $i^\ast \leq \min\{s-2,k-2\}$. Let $r_0(k,s)$ be the least integer greater than
\begin{equation} \label{def_r0}
(s-1)^{\frac{k-1}{k-2}} \prod^{i^\ast}_{i=2}\left( s- A(k,k-i+1)-1 \right)^{ \frac{1}{(k-i-1)(k-i)}}
\end{equation}

For any fixed $s>s_0(k)$, we consider additional parameters. Let  $j \in [k-1]$ and  $2 \leq p \leq k-1$ be integers satisfying the following condition:
\begin{eqnarray} \label{eq:opt2}
b(k,p,j) = \mbox{min } \left\{ j\binom{p}{2}, \left\lfloor \frac{k}{p} \right\rfloor \binom{p}{2} +\binom{k-\lfloor k/p \rfloor p}{2} \right\}
\leq \binom{k}{2} - s +  2, 
\end{eqnarray}
and define
\begin{eqnarray}\label{eq:0pt1}
L(k,s,p,j) =1+ \frac{2p(k-1)}{j(p-1)}.
\end{eqnarray}

If $s_0(k)<s \leq s_1(k)$, let $p^\ast$ be the largest $p \geq 2$ such that~\eqref{eq:opt2} holds for $j=k-1$. Our choice of $s$ ensures that there is such a $p$. We define $r_0(k,s)$ as the least integer greater than
\begin{equation} \label{def_r1}
 \left( s-A(k,2)-1\right)^{L(k,s,p^\ast,k-1)}  \cdot \left( \prod^{k-2}_{i=2}\left( s-A(k,k-i+1)-1  \right)^{ \frac{1}{(k-i-1)(k-i)}} \right) \cdot (s-1)^{\frac{k-1}{k-2} } .
\end{equation}
If $s > s_1$, let $j^\ast$ be the largest $j \geq 1$ such that~\eqref{eq:opt2} holds for $p=2$. We define $r_0(k,s)$ as the least integer greater than
\begin{equation} \label{def_r2}
\left( s-A(k,2)-1\right)^{L(k,s,2,j^\ast)}  \cdot \left( \prod^{k-2}_{i=2}\left( s-A(k,k-i+1)-1  \right)^{ \frac{1}{(k-i-1)(k-i)}} \right) \cdot (s-1)^{\frac{k-1}{k-2} }.
\end{equation}

Table~\ref{values} provides the values of $r_{0}(k,s)$ for a few values of $k$ and $s$\footnote{For completeness, we added values of $r_0$ known to hold for $k=3$.}. The symbols $\ast$ and $\star$ are used to indicate the first value of $s$ such that $s >s_0$ and such that $s>s_1$, respectively.
\begin{table}\label{values}
\begin{center}
	\begin{tabular}[h]{|c|c|c|c|c|c|c|c|c|c|c|c|c|} 
		\hline
		$ k \backslash s$	& 2  & 3  & 4  & 5  & 6 & 7 & 8 &  9 & 10 & 12& 13 & 15 \\ \hline
		$3$	& 2  & 4  &   &   & &  &  &  & &&& \\ \hline
		$4$	& 2  & 3  & 8  & 222$^\ast$  & 5434 &  &  &  & &&& \\ \hline
		$5$	& 2  & 3  & 5  & 11  & 19 & 457$^\ast$ & 3270 &  55507 &  218896 &&& \\ \hline
		$6$	& 2  & 3  &  5 & 7  & 15 & 24 & 35  & $606^\ast$  & 3528 &309393& 933907& $ 1.4 \cdot 10^{12 \star} $ \\ \hline
	\end{tabular}
\end{center}
\caption{$r_{0}(k,s)$ for some small values of $k$ and $s$.}
\end{table}

For comparison, it is easy to see that, if $r \leq r_{1}(k,s)=\lceil (s-1)^{(k-1)/(k-2)}-1\rceil$, then $|\mathcal{C}_{r,\P}(K_n)|>|\mathcal{C}_{r,\P}(T_{k-1}(n))|$ for large values of $n$, so that Theorem~\ref{main_thm} cannot possibly be extended to such values of $r$. In particular, these two tables imply that the values for $r_0(k,s)$ are best possible for $s=3$ and for $(k,s)=(5,4)$.
\begin{center}
\begin{table}
	\begin{tabular}[h]{|c|c|c|c|c|c|c|c|c|c|c|c|c|}
		\hline
		$ k \backslash s$	& 3  & 4  & 5  & 6 & 7 & 8 &  9 & 10 & 12& 13 & 15 \\ \hline
		$4$	&   2  & 5  & 7 & 11 &  &  &  & &&& \\ \hline
		$5$	&   2  & 4  & 6  & 8 & 10 & 13 & 15 & 18&&& \\ \hline
		$6$	&   2  & 3  & 5  & 7 & 9 & 11  & 13  & 15 &20&22& $ 27 $ \\ \hline
	\end{tabular}
	\caption{Value of $r_{1}(k,s)$ for some values of $k$ and $s$.}
\end{table}
\end{center}

The paper is structured as follows. In Section~\ref{sec_preliminaries}, we introduce the tools needed to prove our main results. We then prove Theorem~\ref{stability_thm} and~\ref{main_thm} in Sections~\ref{sec_stability} and~\ref{sec_main}, respectively.

\section{Preliminaries}\label{sec_preliminaries}

In this section, we fix the notation and introduce concepts and results used to prove our main results. We first state a well-known auxiliary lemma.
 \begin{lemma} \label{meulema}
If $\ell \geq 2$ and $G$ is a graph with $m$ edges, then $G$ contains an $\ell$-partite subgraph with more than $(\ell-1) m/\ell$ edges.
\end{lemma}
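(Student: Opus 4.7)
The plan is to invoke the probabilistic method in its most elementary form. I would consider a uniformly random partition of $V(G)$ into classes $V_1,\ldots,V_\ell$, where each vertex is assigned one of the $\ell$ labels independently and uniformly at random. For a fixed edge $uv$, the probability that its two endpoints receive distinct labels is $1-1/\ell=(\ell-1)/\ell$. Applying linearity of expectation over the $m$ edges of $G$, the expected number of edges whose endpoints lie in different classes equals exactly $(\ell-1)m/\ell$. The subgraph formed by such cross-edges is $\ell$-partite with respect to this partition, so some partition realizes at least $(\ell-1)m/\ell$ cross-edges.

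To upgrade the bound to the strict inequality claimed in the lemma, I would observe that the trivial partition placing every vertex of $G$ into $V_1$ yields zero cross-edges, which is strictly less than $(\ell-1)m/\ell$ as soon as $m\geq 1$. Hence the random variable counting cross-edges is not constant on the probability space; since its mean is $(\ell-1)m/\ell$, some outcome must strictly exceed this mean. That outcome furnishes an $\ell$-partite subgraph of $G$ with strictly more than $(\ell-1)m/\ell$ edges.

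I do not foresee a serious obstacle here: the result is a textbook application of the first-moment method, and the strict inequality requires only the small extra observation above. An equivalent route would be to take any $\ell$-partition maximizing the number of cross-edges and apply a local-switching argument: if some vertex $v \in V_i$ had more neighbors in $V_i$ than in some $V_j$, moving $v$ to $V_j$ would strictly increase the cut, contradicting optimality. At such an optimum each vertex has at most a $1/\ell$-fraction of its neighbors in its own class, and a double count of monochromatic edges yields $\geq (\ell-1)m/\ell$ cross-edges; strictness is then handled by the same comparison with the trivial partition.
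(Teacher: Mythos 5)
Your probabilistic argument is correct and is the standard proof: a uniform random $\ell$-labeling gives expected cut $(\ell-1)m/\ell$, and since the all-same-label assignment is a positive-probability outcome realizing a cut of $0 < (\ell-1)m/\ell$ when $m \geq 1$, the cut is not almost surely constant, so some labeling strictly exceeds the mean. The paper states this lemma without proof as well known, and your first-moment argument is precisely the intended one. (Implicitly one needs $m \geq 1$, which you do invoke for strictness; that hypothesis is also needed for the lemma itself to be true.) One small caveat on your alternative route: the local-switching argument at a maximum cut on its own only yields the non-strict bound $\geq (\ell-1)m/\ell$, and the appeal to ``the same comparison with the trivial partition'' is really just the first-moment argument again (the maximum over all partitions is at least the average, and strictly so because the trivial partition lies below the average), so the switching step does not contribute to strictness; that route is fine but not independent of the first.
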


The next lemma generalizes a result of Alon and Yuster~\cite{AY}.
\begin{lemma} \label{lema_four} Fix $1 \leq j \leq k-1$ and $2 \leq p \leq k-1$. Let $0 < \gamma \leq \frac{j(p-1)}{2p(k-1)^2}$ and let $H''$ be a $(k-1)$-partite graph on $m$ vertices with partition $V(H'') = U_1 \cup \cdots \cup U_{k-1}$ and at least $ \ex(m, K_{k}) - \gamma m^2$
edges. If we add at least $\left(\frac{p(k-1)}{j(p-1)} + 1\right) \gamma m^2$ new edges to $H''$, then in
the resulting graph there is a copy of $K_{k}$ with at most $b(k,p,j)=\min \{j\binom{p}{2}, \lfloor k/p \rfloor \binom{p}{2} +\binom{k-\lfloor k/p \rfloor p}{2} \}$ new edges. Every such copy contains at most $p$ vertices in each class $U_i$ and each new edge in the copy connects two vertices of $K_{k}$ that lie in a same vertex class $U_i$ of $H''$.
\end{lemma}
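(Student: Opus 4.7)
The plan is to show that almost all new edges must lie within individual classes of $H''$, and then use the abundance of these internal new edges, together with the high density of $H''$, to embed a copy of $K_k$ whose within-class edges are internal new edges while its cross-class edges all come from $H''$. The vertices of this $K_k$ will be distributed among the classes according to one of the two profiles defining $b(k,p,j)$.

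First I would bound the number of new edges that cross between classes. Because $H''$ is $(k-1)$-partite with classes $U_1,\dots,U_{k-1}$, all of its edges are cross-class, so $|E(H'')|\leq \sum_{i<i'}|U_i||U_{i'}|\leq \ex(m,K_k)$, where the second inequality is Tur\'an's theorem. Since $|E(H'')|\geq \ex(m,K_k)-\gamma m^2$, at most $\gamma m^2$ cross-class pairs are missing from $H''$, so of the at least $(\tfrac{p(k-1)}{j(p-1)}+1)\gamma m^2$ new edges, at most $\gamma m^2$ can cross between classes. Hence at least $\tfrac{p(k-1)}{j(p-1)}\gamma m^2$ new edges lie within a single class; call this set $E_1$. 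A routine degree-counting argument then shows that $\bigl||U_i|-m/(k-1)\bigr|=O(\sqrt{\gamma}\, m)$ for every $i$, and that all but $O(\sqrt{\gamma}\, m)$ vertices in each $U_i$ have degree at least $(1-O(\sqrt{\gamma}))|U_{i'}|$ into every other class $U_{i'}$; the bound $\gamma\leq\tfrac{j(p-1)}{2p(k-1)^2}$ is chosen precisely so that all these deficits stay well below $|U_i|/2$.

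Next I would construct the desired $K_k$. Pigeonhole on $E_1$ among the $k-1$ classes identifies some class containing at least $\tfrac{p}{j(p-1)}\gamma m^2$ internal new edges, and a supersaturation argument (for instance via Kruskal--Katona) then yields a copy of $K_p$ inside $E_1$ in that class, all of whose vertices are \emph{good} in the degree sense of the previous paragraph. The two expressions defining $b(k,p,j)$ correspond to two target profiles for the embedded $K_k$: either (i) $j$ classes each contribute a $K_p$ on internal new edges, plus one singleton vertex from each of the other $k-1-j$ classes, or (ii) $\lfloor k/p\rfloor$ classes each contribute a $K_p$ on internal new edges, plus a single clique of size $k-\lfloor k/p\rfloor p$ on internal new edges in one further class. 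In either profile, the number of internal new edges used is exactly the corresponding term in $b(k,p,j)$. The regularity from the previous paragraph guarantees that every cross-class pair of good vertices is an edge of $H''$, and that the common $H''$-neighborhood of already selected vertices in every remaining class stays large, so the singleton (or small-clique) completion can be carried out greedily.

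The main obstacle will be the simultaneous embedding step: one must locate $p$-cliques in $E_1$ inside several classes at once while preserving enough common $H''$-neighborhood in the remaining classes to close the $K_k$. The exact constants in the hypothesis---the factor $\tfrac{p(k-1)}{j(p-1)}+1$ and the upper bound on $\gamma$---are calibrated precisely for this two-step argument: the factor ensures that, after pigeonholing internal new edges among the $k-1$ classes, enough edges remain in the chosen class(es) to force a $K_p$ via supersaturation, while the bound on $\gamma$ keeps the cumulative loss of candidate neighbors at each greedy step below the threshold at which the embedding would break.
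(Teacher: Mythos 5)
Your opening observation is right and matches the paper: since $H''$ is $(k-1)$-partite with at least $\ex(m,K_k)-\gamma m^2$ edges while a complete $(k-1)$-partite graph has at most $\ex(m,K_k)$ edges, at most $\gamma m^2$ new edges can cross between classes, so at least $\frac{p(k-1)}{j(p-1)}\gamma m^2$ new edges are internal. Likewise, the averaging step that locates $j$ classes containing at least $\frac{p}{p-1}\gamma m^2$ internal new edges is also in the paper. But the embedding half of your argument has genuine gaps.

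First, the supersaturation step fails. Having $\frac{p}{j(p-1)}\gamma m^2$ new edges inside a class $U_i$ does not force any $K_p$, for two independent reasons: (a) for small $\gamma$ (and $\gamma$ can be arbitrarily small; the hypothesis only gives an \emph{upper} bound on $\gamma$), this edge count is far below $\ex(|U_i|,K_p)$; and (b) even if the class were dense, the internal new edges may form a bipartite graph and thus contain no triangle, let alone a $K_p$ for $p\geq3$. Second, the vertex counts in your two target profiles do not close to a $K_k$: profile (i) has $jp+(k-1-j)$ vertices, which equals $k$ only when $j(p-1)=1$. Third, the claim that ``every cross-class pair of good vertices is an edge of $H''$'' is too strong; having degree $(1-O(\sqrt\gamma))|U_{i'}|$ into each other class does not imply full adjacency, so a common-neighborhood argument would still be needed.

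The paper's proof avoids all of this with one clean idea you are missing: it does not try to find a clique of new edges inside a class at all. Instead, it takes the spanning subgraph $\Gamma$ of $H'$ whose edges are the internal new edges in the $j$ chosen classes (at least $\frac{p}{p-1}\gamma m^2$ of them), applies Lemma~\ref{meulema} to extract a \emph{$p$-partite} subgraph $\Gamma'$ with more than $\gamma m^2$ edges, and then observes that $|E(H'')|+|E(\Gamma')|>\ex(m,K_k)$. Tur\'an's theorem forces a copy of $K_k$ in the union $H''\cup\Gamma'$. Since all of this copy's new edges lie in $\Gamma'$, which is $p$-partite and has edges only inside the $j$ selected classes, the copy has at most $p$ vertices per class and at most $j\binom{p}{2}$ new edges; a convexity argument on $\sum\binom{a_i}{2}$ subject to $\sum a_i=k$, $a_i\leq p$ gives the other bound $\lfloor k/p\rfloor\binom{p}{2}+\binom{k-\lfloor k/p\rfloor p}{2}$. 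No degree counting or greedy embedding is required.
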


\begin{proof} 
Let $H''$ be as in the statement of the lemma. Consider adding at least 
$$\left(\frac{p(k-1)}{j(p-1)} + 1\right) \gamma m^2$$ 
new edges to $H''$ to produce a graph $H'$. At least  $\frac{p(k-1)}{j(p-1)} \gamma m^2$ new edges have both endpoints in a same partition class. By an averaging argument, there exist $j$ classes $U_{i_1},\ldots,U_{i_j}$ containing at least $\frac{p}{p-1} \gamma m^2$ new edges. Indeed, if this were false, we would have
\begin{eqnarray*}
\binom{k-2}{j-1}\left(e_{H'}(U_1)+\cdots+e_{H'}(U_{k-1})\right) &=& \sum_{1 \leq i_1 <\cdots <i_j \leq k-1} \left(e_{H'}(U_{i_1})+\cdots+ e_{H'}(U_{i_j}) \right)\\
&<& \binom{k-1}{j} \cdot \frac{p}{p-1} \gamma m^2,
\end{eqnarray*}
which implies that $e_{H'}(U_1)+\cdots+e_{H'}(U_{k-1})<\frac{p(k-1)}{j(p-1)} \gamma m^2$, a contradiction. 

Let $\Gamma$ be the spanning subgraph of $H'$ with edges in $E_{H'}(U_{i_1}) \cup \cdots \cup E_{H'}(U_{i_j})$, so that $\Gamma$ contains at least $\frac{p}{p-1} \gamma m^2$ edges. By Lemma~\ref{meulema}, $\Gamma$ has a $p$-partite subgraph $\Gamma'$ with more than $\gamma m^2$ edges. We will refer to the edges of $\Gamma'$ as the \emph{new} edges. The sum of the number of edges in $\Gamma'$ with the number of edges in $H''$ is greater than $\ex(m, K_{k})$, hence there exists a copy of $K_{k}$ in the union of $H''$ and $\Gamma'$. Note that there cannot be $(p+1)$ or more vertices of this copy in a same class $U_i$, as this would produce a copy of $K_{p+1}$ where all edges are new, a contradiction. Therefore this copy of $K_k$ contains at most  $j \binom{p}{2}$ new edges.  On the other hand, at most $\lfloor k/p \rfloor$ classes may contain $p$ vertices, so that the number of new edges is at most $\lfloor k/p \rfloor \binom{p}{2} +\binom{k-\lfloor k/p \rfloor p}{2}$. As a consequence, the number of new edges is at most $\min \{j\binom{p}{2}, \lfloor k/p \rfloor \binom{p}{2} +\binom{k-\lfloor k/p \rfloor p}{2}\}$, as required.
\end{proof}

For later reference, we state Lemma~\ref{lema_four} in the special case $p=2$ and $j=1$, which is precisely the result of Alon and Yuster~\cite{AY}.
\begin{corollary}\label{lema_two}
Let $0 < \gamma \leq 1/(4(k-1))^2$ be fixed and let $H''$ be a $(k-1)$-partite graph on $m$ vertices with partition $V(H'') = U_1 \cup \cdots \cup U_{k-1}$ with at least $ \ex(m, K_{k}) - \gamma m^2$ edges. If we add at least $(2k-1) \gamma m^2$ new edges to $H''$, then in the resulting graph there is a copy of $K_{k}$ with exactly one new edge, which connects two vertices of $K_{k}$ in the same vertex class $U_i$ of $H''$.
\end{corollary}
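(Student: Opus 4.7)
The plan is to deduce the corollary as a direct specialization of Lemma~\ref{lema_four} with $p=2$ and $j=1$, checking only that the hypotheses translate correctly and that the resulting bound on the number of new edges in the copy of $K_k$ is exactly one.

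First, I would substitute $p=2$ and $j=1$ in the bound $\gamma \leq j(p-1)/(2p(k-1)^2)$ from Lemma~\ref{lema_four}, obtaining $\gamma \leq 1/(4(k-1)^2)$, which matches the hypothesis of the corollary. Here I should note a small discrepancy: the corollary as stated requires $\gamma \leq 1/(4(k-1))^2 = 1/(16(k-1)^2)$, which is a strictly stronger assumption than what Lemma~\ref{lema_four} needs, so the hypothesis transfers immediately. Next, I would evaluate the threshold on the number of added edges: with $p=2$ and $j=1$,
\[
\left(\frac{p(k-1)}{j(p-1)}+1\right)\gamma m^2 = (2(k-1)+1)\gamma m^2 = (2k-1)\gamma m^2,
\]
matching the hypothesis of the corollary.

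Having verified the hypotheses, I would apply Lemma~\ref{lema_four} to conclude the existence of a copy of $K_k$ containing at most
\[
b(k,2,1)=\min\!\left\{\binom{2}{2},\;\left\lfloor\frac{k}{2}\right\rfloor\binom{2}{2}+\binom{k-2\lfloor k/2\rfloor}{2}\right\}=\min\{1,\lfloor k/2\rfloor\}=1
\]
new edges, and such that every new edge joins two vertices lying in the same class $U_i$ of the partition of $H''$. Finally, to upgrade ``at most one'' to ``exactly one'', I would observe that $H''$ is $(k-1)$-partite, hence contains no $K_k$; therefore the copy of $K_k$ produced must use at least one edge outside of $H''$, that is, at least one new edge. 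Combining the two bounds gives precisely one new edge, which by Lemma~\ref{lema_four} connects two vertices in a common class $U_i$, as required.

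There is no real obstacle here: the corollary is obtained by straightforward specialization, with the only tiny subtlety being the observation that a $(k-1)$-partite host forces the copy of $K_k$ to contain at least one new edge, turning the upper bound $b(k,2,1)=1$ into equality.
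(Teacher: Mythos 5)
Your proposal is correct and takes essentially the same approach as the paper, which presents Corollary~\ref{lema_two} precisely as the specialization of Lemma~\ref{lema_four} to $p=2$ and $j=1$. You carefully verify the hypothesis translation (including noticing that the corollary's constant $1/(4(k-1))^2$ is even stronger than the $1/(4(k-1)^2)$ the lemma requires), and you correctly supply the small extra observation that the paper leaves implicit: since $H''$ is $(k-1)$-partite and hence $K_k$-free, the copy of $K_k$ must use at least one new edge, turning the bound $b(k,2,1)=1$ into equality.
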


\subsection{Regularity Lemma}

To prove our results we use an approach similar to the one from \cite{ABKS}, which is based on the Szemer\'{e}di Regularity Lemma~\cite{Sze78}. Let $G = (V,E)$ be a graph, and let $A$ and $B$ be two subsets of $V(G)$. If $A$ and $B$ are non-empty, define the density of edges between $A$ and $B$ by
$$d(A,B) = \frac{e(A,B)}{|A||B|},$$
where $e(A,B)$ is the number of edges with one vertex in $A$ and the other in $B$. (When $A=B$, we write $e(A,A)=e(A)$.)
For $\eps > 0$ the pair $(A,B)$ is called \emph{$\eps$-regular} if, for every subsets $X \subseteq A$ and $Y \subseteq B$ satisfying $|X| \geq \eps|A|$ and $|Y| \geq \eps|B|$, we have
$$|d(X,Y) - d(A,B)| < \eps.$$
An \emph{equitable partition} of a set $V$ is a partition of $V$ into pairwise disjoint classes $V_1,\ldots,V_m$ of almost equal size, i.e., $\arrowvert |V_i| - |V_j| \arrowvert \leq 1$ for all pairs $i,j$. An equitable partition of the set $V$ of vertices of $G$ into the classes $V_1,\ldots,V_m$ is called \emph{$\eps$-regular} if at most $\eps \binom{m}{2}$ of the pairs $(V_i, V_j)$ are not $\eps$-regular.

We shall use the following colored version of the Regularity Lemma~\cite{kosi} that will be particularly useful for our purposes.
\begin{lemma} \label{lrs vc}
For every $\eps > 0$ and every positive integer $r$, there exists a constant $M = M(\eps,r)$ such that the following property holds. If the edges of a graph $G$ of order $n > M$ are $r$-colored $E(G) = E_1 \cup \cdots \cup E_r$, then there is a partition of the vertex set $V(G) = V_1 \cup \cdots \cup V_m$, with $1/\eps \leq m \leq M$, which is $\eps$-regular simultaneously with respect to all graphs $G_i = (V,E_i)$ for all $i \in [r]$.
\end{lemma}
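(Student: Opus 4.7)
The plan is to obtain the colored version as a direct adaptation of the standard proof of the Szemer\'edi Regularity Lemma, carrying the mean-square-density index argument simultaneously over all $r$ color classes. For an equitable partition $P=\{V_{1},\ldots,V_{m}\}$ of $V(G)$ and for each color $i\in[r]$, define
$$q_{i}(P)=\sum_{1\le a<b\le m}\frac{|V_{a}||V_{b}|}{n^{2}}\,d_{i}(V_{a},V_{b})^{2},$$
where $d_{i}$ denotes edge density computed inside $G_{i}=(V,E_{i})$, and set $q(P)=\sum_{i=1}^{r}q_{i}(P)$. The two crucial, classical properties of each $q_{i}$ carry over: (i) $0\le q_{i}(P)\le 1$, so $0\le q(P)\le r$; and (ii) by Cauchy--Schwarz, refining $P$ never decreases $q_{i}(P)$ for any single color $i$, hence never decreases $q(P)$.

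First I would fix an initial equitable partition $P_{0}$ of $V(G)$ with $m_{0}\ge\lceil 1/\eps\rceil$ parts, which enforces the lower bound $m\ge 1/\eps$ in the conclusion. Then iterate: given $P_{t}$, check whether it is $\eps$-regular with respect to every $G_{i}$. If yes, output $P_{t}$. If not, there exists some color $i^{\ast}$ for which more than $\eps\binom{m_{t}}{2}$ pairs $(V_{a},V_{b})$ are not $\eps$-regular in $G_{i^{\ast}}$. Applying the standard defect Cauchy--Schwarz argument from the uncolored proof to $G_{i^{\ast}}$ and $P_{t}$ then produces a refinement $P_{t+1}$, equitable and with at most $m_{t}\cdot 4^{m_{t}}$ parts, satisfying
$$q_{i^{\ast}}(P_{t+1})\ge q_{i^{\ast}}(P_{t})+\eps^{5}.$$
Combined with monotonicity of the other $q_{j}$ under refinement, this gives $q(P_{t+1})\ge q(P_{t})+\eps^{5}$.

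Since $q$ is bounded above by $r$, the iteration must terminate after at most $T=\lceil r/\eps^{5}\rceil$ steps, at which point the current partition is $\eps$-regular simultaneously with respect to every $G_{i}$. The number of parts grows by an exponential factor at each step, giving a tower-type bound $m\le M(\eps,r)$ exactly as in the classical proof, with the tower height increased by the factor $r$ coming from the number of iterations.

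The main obstacle is essentially bookkeeping rather than a new idea: one must verify that the refinement step used for $G_{i^{\ast}}$ can be performed while keeping the partition equitable, and that a refinement chosen to boost $q_{i^{\ast}}$ does not inadvertently harm the other $q_{j}$. The first point is handled exactly as in the standard proof (via the usual rounding of sub-part sizes), and the second is automatic since each $q_{j}$ is monotone nondecreasing under refinement. The stated lower bound $1/\eps\le m$ is preserved by this monotone refinement procedure, which is why it suffices to enforce it only at the initial partition $P_{0}$.
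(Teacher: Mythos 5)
The paper does not actually prove Lemma~\ref{lrs vc}: it is stated as a known result and attributed to the Koml\'os--Simonovits survey~\cite{kosi}, so there is no in-paper proof to compare against. Your sketch is the standard energy-increment adaptation, which is exactly what the cited reference outlines (sum a mean-square-density index over all $r$ colors, note it is bounded by $r$, and iterate the defect Cauchy--Schwarz refinement), so the approach is the right one and the overall structure is sound.

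One point in your write-up is stated too casually. You argue that the other $q_{j}$ do not decrease when passing from $P_{t}$ to $P_{t+1}$ because ``$q_{j}$ is monotone nondecreasing under refinement,'' but $P_{t+1}$ is typically \emph{not} a true refinement of $P_{t}$: after you take the common refinement that witnesses irregularity and then re-cut it into an equitable partition (either with an exceptional class or by rounding to sizes differing by at most one, as in this paper's definition), some vertices migrate between old parts, and strict refinement is lost. The correct statement is that the pre-rounding common refinement $Q$ satisfies $q_{j}(Q)\ge q_{j}(P_{t})$ for every $j$ and $q_{i^{\ast}}(Q)\ge q_{i^{\ast}}(P_{t})+c\eps^{5}$, and the subsequent equalization to $P_{t+1}$ can cost each $q_{j}$ only $O(1/m_{t+1})$ in energy, which is absorbed by taking $m_{t+1}$ large enough relative to $\eps$. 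This is standard bookkeeping and does not break the argument, but since you singled out this exact issue as ``automatic,'' it is worth flagging that the automaticity applies to $Q$, not to $P_{t+1}$; the step from $Q$ to $P_{t+1}$ requires the small explicit loss estimate.
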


A partition as in Lemma \ref{lrs vc} will be called a \emph{multicolored $\eps$-regular partition}. Given such a partition $V_1 \cup \cdots \cup V_m$ and a constant $\eta>0$, we may also define the \emph{multicolored cluster graph} $H=H(\eta)$ associated with this partition and with a constant $\eta>0$: the vertex set is $[m]$ and $e = \{i,j\}$ is an edge of $H$ if the pair $(V_i,V_j)$ is $\eps$-regular with respect to all colors and the density between $V_i$ and $V_j$ is at least $\eta$ for at least one of the colors. Each edge $e=\{i,j\}$ in $H$ is assigned the list $L_e$ of colors $c$ such that $c$ appears with density at least $\eta$ between $V_i$ and $V_j$ in $G$. 

Given a colored graph $\widehat{F}$, we say that a multicolored cluster graph $H$ contains $\widehat{F}$ if $H$ contains a copy of the (uncolored) graph induced by $\widehat{F}$ for which the color of each edge of $\widehat{F}$ is contained in the list of the corresponding edge in $H$. More generally, if $F$ is a graph with color pattern $P$, we say that $H$ contains $(F,P)$ if it contains some colored copy of $F$ with pattern $P$.

Given colored graphs $\widehat{F}$ and $\widehat{H}$, a function $\psi\colon V(\widehat{F}) \rightarrow V(\widehat{H})$ is called a \emph{colored homomorphism} of $\widehat{F}$ in $\widehat{H}$ if, for every edge $e=\{i,j\} \in E(\widehat{F})$, the pair $\{\psi(i),\psi(j)\}$ is an edge of $\widehat{H}$ with the color of $e$. If $H$ is a multicolored cluster graph, it suffices that the color of $e$ lies in the list associated with the edge $\{\psi(i),\psi(j)\}$. In connection with these definitions, the following embedding result holds (for a proof, see~\cite{rainbow_kn}).
\begin{lemma} \label{homocol}
For every $\eta > 0$ and all positive integers $k$ and $r$, there exist $\varepsilon = \varepsilon (r,\eta, k) > 0$ and a positive integer $n_0(r,\eta, k)$ with the following property. Suppose that $\widehat{G}=(V,E)$ is an $r$-colored graph on $n > n_0$ vertices with a multicolored $\varepsilon$-regular partition $V = V_1 \cup \cdots \cup V_m$ which defines the multicolored cluster graph $H = H(\eta)$. Let $\widehat{F}$ be a $k$-vertex graph colored with $t \leq r$ colors. If there exists a colored homomorphism $\psi$ of $\widehat{F}$ into $H$, then the graph $\widehat{G}$ contains $\widehat{F}$.
\end{lemma}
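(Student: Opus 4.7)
The plan is to prove this standard colored embedding lemma by a greedy, vertex-by-vertex embedding controlled through $\varepsilon$-regularity. Write $V(\widehat{F})=\{v_1,\ldots,v_k\}$ and for each $i$ set $U_i=V_{\psi(v_i)}$. By the definition of the multicolored cluster graph $H=H(\eta)$, every edge $v_iv_j\in E(\widehat{F})$ colored $c$ lifts to a pair $(U_i,U_j)$ that is $\varepsilon$-regular with respect to the color-$c$ graph $G_c=(V,E_c)$ and satisfies $d_{G_c}(U_i,U_j)\ge\eta$. (If $\psi$ sends two vertices of $\widehat{F}$ to the same cluster, the embedding below still works provided we insist that the chosen $u_i$ be pairwise distinct, which costs at most $k$ vertices per cluster and is harmless since each cluster has size $\Omega(n/M)$.)

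First I would embed $v_1,\ldots,v_k$ into vertices $u_1,\ldots,u_k$ of $\widehat{G}$ with $u_i\in U_i$ in the natural order, while maintaining for every $j>i$ a candidate set $C_j^{(i)}\subseteq U_j$ defined as the set of $w\in U_j$ such that, for every $\ell\le i$ with $v_\ell v_j\in E(\widehat{F})$ of color $c$, the vertex $w$ is joined to $u_\ell$ by an edge of color $c$ in $\widehat{G}$. Initialise $C_j^{(0)}=U_j$. At step $i+1$ we choose $u_{i+1}\in C_{i+1}^{(i)}$ so that, for every $j>i+1$ with $v_{i+1}v_j\in E(\widehat{F})$ of color $c$, the updated set $C_j^{(i+1)}=C_j^{(i)}\cap N_{G_c}(u_{i+1})$ has size at least $(\eta-\varepsilon)|C_j^{(i)}|$.

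The key calculation is standard. By induction on $i$ we have $|C_j^{(i)}|\ge(\eta-\varepsilon)^{i}|U_j|$, which exceeds $\varepsilon|U_j|$ once $\varepsilon$ is small compared with $\eta$ and $k$. Hence the $\varepsilon$-regularity of $(U_{i+1},U_j)$ in $G_c$ applies to the sub-pair $(C_{i+1}^{(i)},C_j^{(i)})$ and shows that its density differs from $d_{G_c}(U_{i+1},U_j)\ge\eta$ by less than $\varepsilon$. Consequently the number of $w\in C_{i+1}^{(i)}$ whose update would violate the inductive bound is at most $\varepsilon|C_{i+1}^{(i)}|$; summing over the at most $k$ relevant values of $j$ and requiring $(\eta-\varepsilon)^{k}>(k+1)\varepsilon$ leaves an admissible vertex, so the induction continues through step $k$ and produces the desired colored copy of $\widehat{F}$.

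The main obstacle, and essentially the only place where care is required, is the choice of constants. It suffices to pick $\varepsilon=\varepsilon(\eta,k)$ small enough that $(\eta-\varepsilon)^{k}>(k+1)\varepsilon$ (for instance $\varepsilon\le\eta^{k}/(2k+2)$), then let $M=M(\varepsilon,r)$ be the constant from Lemma~\ref{lrs vc}, and finally choose $n_0(r,\eta,k)$ large enough that each cluster of any $\varepsilon$-regular partition guaranteed by that lemma has size at least $1/(\eta-\varepsilon)^{k}$, so that the final candidate sets are non-empty and large enough to accommodate pairwise distinct choices of the $u_i$. With these choices the greedy embedding succeeds, and the map $v_i\mapsto u_i$ realises $\widehat{F}$ as a colored subgraph of $\widehat{G}$.
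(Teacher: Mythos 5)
The paper itself does not prove Lemma~\ref{homocol}; it refers the reader to~\cite{rainbow_kn} for a proof. Your argument is the standard greedy, vertex-by-vertex embedding through a sequence of shrinking candidate sets controlled by $\varepsilon$-regularity, and this is essentially the canonical proof of such colored embedding lemmas, so it is almost certainly the same approach as the cited reference. The argument is correct, with one small slip in the exposition: at the inductive step you state that the number of $w\in C_{i+1}^{(i)}$ whose selection would ruin a future candidate set is ``at most $\varepsilon|C_{i+1}^{(i)}|$,'' but what $\varepsilon$-regularity of the pair $(U_{i+1},U_j)$ actually yields is the bound $\varepsilon|U_{i+1}|$, which exceeds $\varepsilon|C_{i+1}^{(i)}|$ once the candidate sets have shrunk. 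Your required inequality $(\eta-\varepsilon)^{k}>(k+1)\varepsilon$ is exactly what is needed to compensate for the correct bound $\varepsilon|U_{i+1}|$ (had the bound genuinely been $\varepsilon|C_{i+1}^{(i)}|$, $k\varepsilon<1$ would have sufficed), so the constants and the conclusion are right; only the stated intermediate bound needs $C_{i+1}^{(i)}$ replaced by $U_{i+1}$.
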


The following standard embedding result will also be useful.
\begin{lemma} \label{abund}
Let $ k \geq 2$ be an integer and fix a constant $ 0 < \alpha \leq 3/4$. Let $ G $ be a graph whose vertex set contains mutually disjoint sets $W_{1},\ldots , W_{k}$ with the following property. For every pair $ \{i,j\} \subseteq [k]$, where $i\neq j$, and all subsets $X_{i} \subseteq W_{i}$, where $| X_{i} |\geq \alpha^{k} | W_{i} |$, and $X_{j} \subseteq W_{j}$, where $| X_{j} |\geq \alpha^{k} | W_{j} |$, there are at least $\alpha| X_{i} || X_{j} |$ edges between $X_{i}$ and $X_{j}$ in $G$. Then $G$ contains a copy of $K_{k}$ with one vertex in each set $W_{i}$.
\end{lemma}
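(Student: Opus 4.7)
The plan is to prove the lemma by induction on $k$. For the base case $k=2$, the density hypothesis applied with $X_1=W_1$ and $X_2=W_2$ yields $e(W_1,W_2)\ge \alpha|W_1||W_2|>0$, so any such edge is a $K_2$ with one endpoint in each class.

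For the induction step, assume the lemma for $k-1$, and consider classes $W_1,\ldots,W_k$ satisfying the hypothesis for $k$. The strategy is to locate a vertex $v_1\in W_1$ whose neighborhood captures a linear fraction of each remaining class, and then to apply the inductive hypothesis to the restricted classes $W_i':=N(v_1)\cap W_i$ for $i=2,\ldots,k$. To find such a $v_1$, for each $i\ge 2$ define the set of bad vertices $B_i=\{v\in W_1:|N(v)\cap W_i|<\alpha|W_i|\}$. If $|B_i|\ge \alpha^k|W_1|$, the density hypothesis applied to $(B_i,W_i)$ gives $e(B_i,W_i)\ge \alpha|B_i||W_i|$, while by the definition of $B_i$ we have $e(B_i,W_i)<\alpha|B_i||W_i|$, a contradiction. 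Thus $|B_i|<\alpha^k|W_1|$ for every $i\in\{2,\ldots,k\}$, and since $(k-1)\alpha^k<1$ for all $0<\alpha\le 3/4$ and $k\ge 2$ (the worst case being $k=4$ where $(k-1)\alpha^k=243/256$), the union $\bigcup_{i=2}^{k} B_i$ does not cover $W_1$, so some $v_1$ with $|N(v_1)\cap W_i|\ge \alpha|W_i|$ for all $i\ge 2$ exists.

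It remains to verify that $W_2',\ldots,W_k'$ satisfy the hypothesis of the lemma with $k-1$ in place of $k$ and the same constant $\alpha$. For any $i\neq j$ in $\{2,\ldots,k\}$ and any subsets $X_i'\subseteq W_i'$, $X_j'\subseteq W_j'$ with $|X_i'|\ge \alpha^{k-1}|W_i'|$ and $|X_j'|\ge \alpha^{k-1}|W_j'|$, the inequality $|W_i'|\ge \alpha|W_i|$ gives $|X_i'|\ge \alpha^{k-1}\cdot\alpha|W_i|=\alpha^k|W_i|$, and likewise $|X_j'|\ge \alpha^k|W_j|$. The original density hypothesis therefore delivers $e(X_i',X_j')\ge \alpha|X_i'||X_j'|$, as required. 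By the inductive hypothesis there is a copy of $K_{k-1}$ with one vertex $v_i\in W_i'$ for each $i\in\{2,\ldots,k\}$, and since every such $v_i$ lies in $N(v_1)$ by construction, the vertices $v_1,v_2,\ldots,v_k$ form a $K_k$ with one vertex in each $W_i$.

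The main obstacle, really a design issue rather than a technical one, is the tight alignment between the exponent $\alpha^k$ in the density threshold and the number of classes. The inductive step forces us to verify the hypothesis on sub-subsets of $W_i'$ of relative size at least $\alpha^{k-1}$, which in absolute terms is at least $\alpha^{k-1}|W_i'|$; for this to fall within the range covered by the original density hypothesis one needs $|W_i'|\ge \alpha|W_i|$, which is exactly what the choice of $v_1$ delivers. This is also why the constant $3/4$ appears: it is the largest value of $\alpha$ for which the bad-set union $\bigcup_{i=2}^{k} B_i$ can be guaranteed to miss some $v_1\in W_1$ for every $k\ge 2$ using this direct greedy argument.
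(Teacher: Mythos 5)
Your proof is correct and follows essentially the same argument as the paper's: induction on $k$, identify and discard the ``bad'' vertices in one class whose neighborhood in another class is too small (bounded using the density hypothesis), pick a surviving vertex, restrict to its neighborhoods, verify the hypothesis carries over because $\alpha^{k-1}|W_i'|\ge\alpha^k|W_i|$, and apply the inductive hypothesis. The only cosmetic difference is that you anchor the argument at $W_1$ while the paper anchors at $W_k$; your explicit numerical check that $(k-1)\alpha^k<1$ peaks at $243/256$ (attained at both $k=4$ and $k=5$) is a fine substitute for the paper's $\alpha\le 3/4<\min_{2\le\ell\le k}(\ell-1)^{-1/\ell}$.
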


\begin{proof} Our proof is by induction on $k$. For $k =2$ the result is trivial. Assume by induction that the statement holds for $k-1$, where $k \geq 3$. Now, fix 
\begin{equation}\label{eq_aux3}
\alpha \leq \frac{3}{4} < \min\left\{(\ell-1)^{-\frac{1}{\ell}} \colon 2 \leq \ell \leq k\right\}
\end{equation}
and let $ G $ be a graph whose vertex set contains mutually disjoint sets $W_{1},\ldots , W_{k}$ as in the statement of the lemma.

For all $i \in [k-1]$, let $W^{i}_{k} \subseteq W_{k}$ contain all vertices in $W_k$ with fewer than $\alpha| W_{i} |$ neighbors in $W_{i}$. Then we have $e(W^{i}_{k}, W_{i}) < \alpha|W^{i}_{k}| | W_{i} |$, so that $| W^{i}_{k} | < \alpha^{k} |W_{k}|$ by hypothesis. Our choice of $\alpha$ implies that  
$$\left| \bigcup^{k-1}_{i = 1} W^{i}_{k} \right| <(k-1)\cdot \alpha^{k} \cdot | W_{k} | \stackrel{\eqref{eq_aux3}}{<} | W_{k} |.$$ 
Let $v$ be a vertex in $W_k \setminus  \bigcup^{k-1}_{i = 1} W^{i}_{k}$ and, for $i \in [k-1]$, let $W'_{i}$ be the set of neighbors of $v$ in $W_{i}$, so that $| W'_{i} | \geq \alpha | W_{i} |$. Observe that, for all subsets $X_{i} \subseteq W'_{i}$ and $X_{j} \subseteq W'_{j}$ such that $| X_{i} |\geq \alpha^{k-1} | W'_{i} |\geq \alpha^{k} | W_{i} |$ and $| X_{j} |\geq \alpha^{k-1} | W'_{j} | \geq \alpha^{k} | W_{j} |$, there are at least $\alpha| X_{i} || X_{j} |$ edges between $X_i$ and $X_j$ in $G$. By induction, $G$ contains a copy of $K_{k-1}$ with one vertex in each set $W'_{i}$, where $1\leq i \leq k-1$. Adding $v$ creates a copy of $K_k$ in $G$ with one vertex in $W_{i}$ for each $i \in [k]$.
\end{proof}

\subsection{Stability}
Another concept that will be particularly useful in our paper are stability results in the sense of Erd\H{o}s and Simonovits~\cite{simonovits}. It will be convenient to use the following theorem by  F\"uredi~\cite{fu15}.
 \begin{theorem} \label{theorem:stability_furedi2} 
	 Let $G = (V,E)$ be a $K_{k}$-free graph on $m$ vertices. If $|E| = \ex (m, K_{k}) -t$ for some $t \geq 0$, then there exists a partition  $V= V_1 \cup \cdots \cup  V_{k-1}$
 with $\sum_{i = 1}^{k-1} e(V_i) \leq  t$. 
 \end{theorem}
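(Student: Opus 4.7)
The plan is to prove the statement by induction on $k$, using the neighborhood of a vertex of maximum degree. The base case $k=2$ is trivial: a $K_2$-free graph is edgeless, so $t=0$, and the one-part partition $V_1 = V$ satisfies $e(V_1) = 0 \leq t$.

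For the inductive step with $k \geq 3$, let $v$ be a vertex of maximum degree $d$ in $G$, and write $A = N(v)$ and $B = V \setminus (A \cup \{v\})$. Since $G$ is $K_k$-free, $G[A]$ is $K_{k-1}$-free on $d$ vertices, so applying the induction hypothesis to $G[A]$ yields a partition $A = A_1 \cup \cdots \cup A_{k-2}$ with $\sum_{i=1}^{k-2} e(A_i) \leq \ex(d, K_{k-1}) - e(A)$. Setting $V_i = A_i$ for $i \in [k-2]$ and $V_{k-1} = B \cup \{v\}$ yields a $(k-1)$-partition of $V$; the key point is that $v$ has no neighbors in $B$, so $e(V_{k-1}) = e(B)$, and therefore $\sum_i e(V_i) \leq \ex(d, K_{k-1}) - e(A) + e(B)$. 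Writing $e(G) = e(A) + e(B) + e(A, B) + d$, the target inequality $\sum e(V_i) \leq t = \ex(m, K_k) - e(G)$ rearranges to $\ex(d, K_{k-1}) + 2 e(B) + e(A, B) + d \leq \ex(m, K_k)$. Since $v$ attains the maximum degree, every $u \in B$ satisfies $d(u) \leq d$, hence $2 e(B) + e(A, B) = \sum_{u \in B} d(u) \leq d(m - 1 - d)$, so it suffices to establish the Tur\'an-type inequality
$$\ex(d, K_{k-1}) + d(m - d) \leq \ex(m, K_k).$$

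The last inequality is witnessed by an explicit $K_k$-free construction on $m$ vertices: take a vertex $v^*$ adjacent to every vertex of a set $U$ of size $d$ that induces a copy of $T_{k-2}(d)$, together with an independent set $W$ of size $m - 1 - d$ such that every vertex of $W$ is joined to every vertex of $U$ but not to $v^*$. A short case analysis on cliques shows the graph is $K_k$-free (a clique has at most $k-2$ vertices in $U$, at most one in $W$, and cannot contain both $v^*$ and a vertex of $W$), and its edge count is exactly $\ex(d, K_{k-1}) + d + d(m - 1 - d) = \ex(d, K_{k-1}) + d(m - d)$, proving the inequality. The main technical step is precisely this Tur\'an-type inequality, which is dispatched by the construction above; the remainder of the argument is an elementary induction whose payoff is the simple observation that the non-neighborhood of a maximum-degree vertex contributes no extra within-part edges to the $(k-1)$-th part.
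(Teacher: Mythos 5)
Your proof is correct, and it essentially reproduces the short argument from the cited reference (F\"uredi's 2015 paper); the paper itself does not prove Theorem~\ref{theorem:stability_furedi2} but imports it from~\cite{fu15}. The core mechanism---induct on $k$ via a vertex $v$ of maximum degree $d$, apply the inductive hypothesis to the $K_{k-1}$-free neighborhood $N(v)$, let the non-neighbors of $v$ (plus $v$ itself) form the $(k-1)$-st class, bound $\sum_{u\in B} d(u) \leq d(m-1-d)$, and conclude via the inequality $\ex(d,K_{k-1}) + d(m-d) \leq \ex(m,K_k)$---is exactly F\"uredi's. Your explicit construction certifying that last inequality (a vertex $v^*$ joined to a $T_{k-2}(d)$, with an independent set $W$ of size $m-1-d$ complete to $U$ and non-adjacent to $v^*$) is a clean way to dispatch it, though one can also observe directly that the graph is $K_k$-free by noting it is $(k-1)$-colorable after merging $\{v^*\}$ with $W$. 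All the bookkeeping (the identity $2e(B)+e(A,B)=\sum_{u\in B}d(u)$, the use of $e(V_{k-1})=e(B)$ since $v$ has no neighbor in $B$, and the applicability of the inductive hypothesis because $e(A)\leq\ex(d,K_{k-1})$) checks out.
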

 We recall the following bounds on the number of edges in the Tur\'an graph $T_{k-1}(m)$:
\begin{eqnarray}\label {eq:turan_number_1}
 && \frac{(k-2)m^2}{2(k-1)} - k +1 < \ex(m,K_{k}) \leq \frac{(k-2)m^2}{2(k-1)}.
\end{eqnarray}

For later use, we state the following fact about the size of the classes in a $(k-1)$-partite graph with a large number of edges (the easy proof is in~\cite{rainbow_kn}). 
 \begin{proposition} \label{prop:prop1}
Let $G=(V,E)$ be a $(k-1)$-partite graph on $m$ vertices with $(k-1)$-partition $V= V_1 \cup \cdots 
\cup V_{k-1}$. If, for some $t \geq (k-1)^2$, the graph $G$ contains at least $\ex (m, K_{k}) - t$
edges, then for each $i \in [k-1]$ we have
$$
\left||V_i| - \frac{m}{k-1} \right| 
< \sqrt{2t}.
$$
\end{proposition}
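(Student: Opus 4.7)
The plan is to combine a direct edge-counting argument with a Cauchy--Schwarz step that exploits the constraint $\sum_i |V_i|=m$. Because $G$ is $(k-1)$-partite with respect to the given partition, it is a subgraph of the complete multipartite graph with the same parts, so
$$|E(G)| \;\leq\; \binom{m}{2} - \sum_{i=1}^{k-1} \binom{|V_i|}{2}.$$
Combining this with the hypothesis $|E(G)| \geq \ex(m,K_{k}) - t$ and the lower bound $\ex(m,K_{k}) > \frac{(k-2)m^2}{2(k-1)} - (k-1)$ from~\eqref{eq:turan_number_1}, a short computation yields
$$\sum_{i=1}^{k-1}|V_i|^2 \;<\; \frac{m^2}{k-1} + 2(k-1) + 2t.$$
Setting $d_i := |V_i| - \tfrac{m}{k-1}$, this reads $\sum_{i=1}^{k-1} d_i^2 < 2(k-1)+2t$, and the identity $\sum_i d_i = 0$ follows from $\sum_i |V_i|=m$.

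The crucial step is to pass from control of the sum $\sum_i d_i^2$ to control of each individual $|d_i|$. Fix any $i \in [k-1]$. Since $\sum_{j \neq i} d_j = -d_i$, Cauchy--Schwarz applied to the $k-2$ numbers $\{d_j : j \neq i\}$ gives $(k-2)\sum_{j \neq i} d_j^2 \geq d_i^2$, and hence
$$d_i^2 \;\leq\; \tfrac{k-2}{k-1}\sum_{j=1}^{k-1} d_j^2 \;<\; \tfrac{k-2}{k-1}\bigl(\,2t+2(k-1)\bigr).$$
A simple rearrangement shows that the right-hand side is strictly less than $2t$ exactly when $(k-2)(k-1) < t$, which follows from the hypothesis $t \geq (k-1)^2$. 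This gives $|d_i| < \sqrt{2t}$, as required.

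The only subtle point is this last inequality: the naive estimate $d_i^2 \leq \sum_j d_j^2$ would lose a factor $(k-1)/(k-2)$ and miss the threshold, so it is essential to invoke the zero-sum condition $\sum_i d_i = 0$ via Cauchy--Schwarz. The hypothesis $t \geq (k-1)^2$ is precisely what is needed so that this improved factor converts the bound $\sum_i d_i^2 < 2t + 2(k-1)$ into the strict pointwise bound $d_i^2 < 2t$.
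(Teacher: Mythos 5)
Your proof is correct, and it is fully self-contained (the paper itself only cites~\cite{rainbow_kn} for this proposition, so there is nothing in the present manuscript to compare against line by line). The chain of inequalities checks out: using $|E(G)| \leq \binom{m}{2} - \sum_i \binom{|V_i|}{2} = \frac{m^2}{2} - \frac{1}{2}\sum_i |V_i|^2$ together with $|E(G)| \geq \ex(m,K_k)-t$ and the left inequality in~\eqref{eq:turan_number_1} gives $\sum_i |V_i|^2 < \frac{m^2}{k-1} + 2(k-1) + 2t$, hence $\sum_i d_i^2 < 2t+2(k-1)$ after completing the square with $\sum_i d_i = 0$. The Cauchy--Schwarz step $d_i^2 = \bigl(\sum_{j\neq i} d_j\bigr)^2 \leq (k-2)\sum_{j\neq i} d_j^2$, giving $d_i^2 \leq \frac{k-2}{k-1}\sum_j d_j^2$, is the right way to exploit the zero-sum constraint, and you are correct that the naive $d_i^2 \leq \sum_j d_j^2$ would not close the argument. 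One tiny imprecision: the rearranged sufficient condition is $(k-2)(k-1) \leq t$ (non-strict), not $(k-2)(k-1) < t$; combined with the already strict bound $d_i^2 < \frac{k-2}{k-1}(2t+2(k-1))$ it yields $d_i^2 < 2t$ whenever $t \geq (k-1)(k-2)$, and the hypothesis $t \geq (k-1)^2$ comfortably covers this. So the hypothesis is in fact slightly stronger than your argument needs, which is harmless.
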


We also consider the \emph{entropy function} $H \colon [0,1] \rightarrow [0,1]$ given by $H(x) = -x \log_2 x - (1-x) \log_2(1-x)$  with $H(0) = H(1) = 0$. It is used in the well-known inequality
\begin{eqnarray} \label{eq:entropy1}
\binom{n}{\alpha n} \leq 2^{H(\alpha) n}
\end{eqnarray}
for all $0 \leq \alpha \leq 1$. It turns out that, for $x \leq 1/8$, we have:
\begin{align} \label{eq:entropy2}
H(x) \leq -2x \log_2 x.
\end{align}

\section{Proof of Theorem~\ref{stability_thm}}\label{sec_stability}

In this section, we prove Theorem~\ref{stability_thm}. For convenience, we restate it here.
\addtocounter{section}{-2}
\addtocounter{theorem}{2}
\begin{theorem}
Let $k \geq 4$ and $ 2 \leq s \leq \binom{k}{2}$ be integers. Fix $r \geq r_{0}(k,s)$, defined in~\eqref{def_r0},~\eqref{def_r1} and~\eqref{def_r2} below for $s \leq s_0(k)$, $s_0(k)<s\leq s_1(k)$ and $s>s_1(k)$, respectively. For any $\delta > 0$, there is $n_{0}=n_0(\delta,r,k,s)$ as follows. If $G=(V,E)$ is a graph on $n > n_{0}$ vertices such that $|\mathcal{C}_{r,\P}(G)| \geq r^{\ex(n,K_{k})}$, then there is a partition $V=W_{1}  \cup \cdots \cup W_{k-1}$ such that at most $\delta n^{2}$ edges have both endpoints in a same class $W_i$.
\end{theorem}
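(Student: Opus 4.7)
The plan is to run a regularity-based coloring-counting argument in the spirit of Alon, Balogh, Keevash and Sudakov~\cite{ABKS} (see also~\cite{rainbow_kn}), reducing the stability statement to a list-extremal optimization whose critical threshold is precisely $r_0(k,s)$. For each $\P$-free $r$-coloring of $G$, Lemma~\ref{lrs vc} produces a multicolored $\eps$-regular partition $V(G)=V_1\cup\cdots\cup V_m$ and the associated multicolored cluster graph $H=H(\eta)$, with a list $L_e\subseteq[r]$ on each edge $e\in E(H)$. The number of possible pairs $(H,L)$ is bounded by a function of $M(\eps,r)$, so pigeonholing the $r^{\ex(n,K_k)}$ colorings over these pairs fixes some $(H,L)$ accounting for at least $r^{\ex(n,K_k)-o(n^2)}$ colorings of $G$. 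By Lemma~\ref{homocol}, this $(H,L)$ cannot contain a colored homomorphic image of any $K_k$ whose $\binom{k}{2}$ edges receive at least $s$ distinct colors chosen from the corresponding lists: otherwise $G$ would itself contain a $K_k$ with $s$ colors. On the counting side, the number of $r$-colorings of $G$ consistent with $(H,L)$ is at most
\begin{equation}\label{eq:plan_prod}
r^{o(n^2)}\cdot\prod_{\{i,j\}\in E(H)}|L_{\{i,j\}}|^{|V_i||V_j|},
\end{equation}
because each regular high-density pair uses only colors of $L_{\{i,j\}}$ on all but $o(|V_i||V_j|)$ of its edges, while irregular, low-density, and within-cluster pairs jointly account for $o(n^2)$ edges of $G$. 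Combining the two bounds yields
\begin{equation}\label{eq:plan_star}
\prod_{\{i,j\}\in E(H)}|L_{\{i,j\}}|^{|V_i||V_j|}\;\geq\;r^{\ex(n,K_k)-o(n^2)}.
\end{equation}

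The heart of the proof is to show that, under the clique constraint above and the assumption $r\geq r_0(k,s)$, inequality~\eqref{eq:plan_star} forces $H$ to lie within edit distance $\delta' m^2$ of a $(k-1)$-partite graph, where $\delta'$ may be chosen as small as desired. Fix the $(k-1)$-partition $[m]=U_1\cup\cdots\cup U_{k-1}$ maximizing the cross-edge count of $H$, and let $\gamma m^2$ denote the total number of intra-part edges. For each pair $(p,j)$ satisfying~\eqref{eq:opt2}, Lemma~\ref{lema_four} (or Corollary~\ref{lema_two} in the special case $(p,j)=(2,1)$) extracts copies of $K_k$ in $H$ with at most $b(k,p,j)$ intra-part edges; the clique constraint then bounds the joint list on these intra-part edges to fewer than $s$ distinct representatives. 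This converts into an upper bound on the relevant portion of the product in~\eqref{eq:plan_star} by a power of $s-A(k,k-i+1)-1$ for each ``depth'' $i$ (and $s-1$ at the base case). Optimizing the choice of $(p,j)$ against the geometric cost of losing cross-edges produces exactly the exponents $L(k,s,p,j)$ and $\frac{1}{(k-i-1)(k-i)}$ that appear in~\eqref{def_r0}--\eqref{def_r2}; the piecewise definition of $r_0(k,s)$ records which pair $(p,j)$ dominates in each of the regimes $s\leq s_0(k)$, $s_0(k)<s\leq s_1(k)$, $s>s_1(k)$. For $r\geq r_0(k,s)$ the resulting upper bound strictly violates~\eqref{eq:plan_star} whenever $\gamma\geq\delta'$, so we must have $\gamma<\delta'$.

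To pass from $H$ to $G$, set $W_t=\bigcup_{i\in U_t}V_i$ for $t\in[k-1]$. Edges of $G$ with both endpoints in a common $W_t$ split into within-cluster pairs (at most $m\binom{\lceil n/m\rceil}{2}=o(n^2)$ edges), $\eps$-irregular pairs (at most $\eps n^2/2$ edges), regular pairs of density below $\eta$ (at most $\eta n^2/2$ edges), and regular high-density pairs corresponding to intra-part edges of $H$ (at most $\delta' n^2$ edges). Choosing $\eps,\eta,\delta'$ sufficiently small relative to $\delta$ delivers the desired bound $\delta n^2$. The main obstacle of the whole argument is the optimization in the previous paragraph: extracting forbidden $K_k$'s from intra-part edges is straightforward via Lemma~\ref{lema_four} and Corollary~\ref{lema_two}, but converting the combinatorial ``no $k$-clique of $H$ has list structure producing $s$ distinct colors'' condition into a tight analytic bound on a product of list sizes, and verifying that the specific value $r_0(k,s)$ defined in~\eqref{def_r0}--\eqref{def_r2} is just large enough to collapse the maximum onto the $(k-1)$-partite regime, is delicate. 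The piecewise nature of $r_0(k,s)$ reflects the fact that the dominant pair $(p,j)$ in Lemma~\ref{lema_four} changes as $s$ crosses the thresholds $s_0(k)$ and $s_1(k)$, and tracking this transition carefully is the main source of technical complexity.
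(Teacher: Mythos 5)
Your high-level plan matches the paper's: apply the colored regularity lemma, pass to a multicolored cluster graph $H$ with lists $L_e$, observe that forbidden patterns in $H$ would transfer to $G$ via Lemma~\ref{homocol}, and bound the product $\prod_{e}|L_e|^{|V_i||V_j|}$ from above using the absence of $s$-colored $K_k$'s in $H$. However, the step you flag as ``delicate'' --- turning the no-$s$-colored-$K_k$ condition into a tight bound on the product, and verifying that $r_0(k,s)$ is exactly the threshold --- is in fact the entire content of the proof, and your sketch does not supply it. The paper's mechanism is Claim~\ref{claim}: for each depth $i$ with $A(k,k-i)\leq s-1$, any $K_k$ whose edges all have lists of size $\geq s-A(k,k-i)$ and with at least $A(k,k-i)$ edges of list-size $\geq s$ would greedily yield an $s$-colored $K_k$; via Lemma~\ref{meulema} this gives the nested linear constraints $\frac{k-i-1}{k-i}(e_{s-A(k,k-i)}(H)+\cdots+e_{s-1}(H))\leq\beta m^2$ for a single slack parameter $\beta(H)=\frac{1}{m^2}(\ex(m,K_k)-\sum_{j\geq s}e_j(H))$. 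Solving the resulting linear program produces precisely the exponents $\frac{1}{(k-i-1)(k-i)}$ and $\frac{k-1}{k-2}$ in~\eqref{def_r0}. You state that such exponents ``come out of the optimization,'' but without Claim~\ref{claim} and the LP there is no derivation; the proposal asserts the conclusion of the hard part rather than proving it.

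A second, more structural issue: you take the \emph{max-cut} $(k-1)$-partition of all of $H$, whereas the paper applies F\"uredi's stability theorem (Theorem~\ref{theorem:stability_furedi2}) to the spanning subgraph $H'$ whose edges have lists of size $\geq s$. This distinction matters. Lemma~\ref{lema_four} and Corollary~\ref{lema_two} require a $(k-1)$-partite base graph $H''$ with at least $\ex(m,K_k)-\gamma m^2$ edges, and the paper gets this with a controlled $\gamma\in[\beta,2\beta]$ precisely because $H'$ is $K_k$-free and F\"uredi stability ties the partition quality directly to $\beta$. The max-cut partition of $H$ need not satisfy the density hypothesis of Lemma~\ref{lema_four} in terms of the same parameter that governs the product bound, and your $\gamma$ (intra-part edges of $H$) mixes small-list and large-list edges, breaking the clean two-case split ($\beta(H)\geq\beta_0$ vs.\ $\beta(H)<\beta_0$) on which the paper's contradiction argument in~\eqref{eq:107} rests. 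Finally, for $s>s_0(k)$ the paper's use of Lemma~\ref{lema_four} is applied to $\widetilde{E}\cup E'$ with $\widetilde{E}$ coming out of the F\"uredi partition of $H'$, a construction your sketch does not reproduce. In short: the roadmap is right and the lemmas you cite are the right ones, but the proof of Claim~\ref{claim}, the precise definition and role of $\beta$, and the LP solution are missing, and the choice of partition should be revisited.
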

\addtocounter{section}{2}
\addtocounter{theorem}{-3}

\begin{proof}
Fix positive integers $ k \geq 4 $, $2 \leq s \leq \binom{k}{2}$ and $r \geq r_{0}(k,s)$ according to (\ref{def_r0}), (\ref{def_r1}) or (\ref{def_r2}), respectively, depending on the value of $s$. Let $ \delta >0 $. We shall fix a positive constant $\beta_0 < \frac{\delta}{8k+2}$ and consider $ \eta >0 $  sufficiently small to satisfy 
\begin{eqnarray}
r\eta &<& \frac{\delta}{2} \label{eq:eta1} 
\end{eqnarray}
and inequality~\eqref{eq_aux1}.

Let $n_{0}=n_{0}(r,\eta,k)$ and $ \varepsilon= \varepsilon(r,\eta,k)>0 $, where  $\varepsilon < \eta/2 $, be given by Lemma \ref{homocol} and let $ M=M(\varepsilon,r) $ be defined by Lemma~\ref{lrs vc}. Let $G$ be a graph on $n > \max \{ n_{0}, M \}$ vertices with at least $r^{\ex(n,K_k)}$ distinct $ (K_{k},\geq s) $-free $r$-colorings. Fix one such coloring.

By Lemma~\ref{lrs vc}, there exists a partition of $V(G)$ into $1/\eps \leq  m\leq M $ parts that is $\eps$-regular with respect to all $r$ colors. Let $ H=H(\eta)$ be the $m$-vertex multicolored cluster graph associated with this partition, where each edge $e$ has a non-empty list
$L_e$ of colors. We write $E_i=E_i(H)$ for the set of edges of $H$ for which $|L_e|=i$, and we let $e_i(H)=|E_i(H)|$, for $i \in [r]$.

We shall bound the number of $r$-colorings of $G$ that lead to the partition $ V(G)=V_{1} \cup \cdots \cup V_{m} $ and to the multicolored cluster graph $ H $. Given a color $ c \in[r] $, the number of $\eps$-irregular pairs $(V_i, V_j)$ with respect to the spanning subgraph $ G_{c} $ of $G$ with edge set given by all edges of color $c$ is at most $ \varepsilon \binom{m}{2} $. This leads to at most
\begin{equation}\label{104}
r \cdot \varepsilon \cdot \binom{m}{2} \cdot \left(\frac{n}{m}\right)^{2} \leq \frac{r \cdot \varepsilon}{2}\cdot  n^{2}
\end{equation}
edges between $\eps$-irregular pairs with respect to some color. By the definition of an $ \varepsilon $-regular partition and the fact that  $ m \geq  1 / \varepsilon$, there are at most
\begin{equation}\label{105}
m \cdot \left(\frac{n}{m}\right)^{2} = \frac{n^{2}}{m} \leq \varepsilon n^{2}
\end{equation}
edges with both endpoints in the same class $V_{i}$ for some $i \in [m]$. Finally, the number of edges with some color $ c $ connecting a pair $ (V_{i}, V_{j}) $ such that the density of the pair in $G_c $ is less than $ \eta $ is bounded above by
\begin{equation}\label{106}
r \cdot \eta \cdot \binom{m}{2} \cdot \left(\frac{n}{m}\right)^{2} < \frac{r \cdot \eta}{2} \cdot n^{2}.
\end{equation}
Combining equations $ (\ref{104}) - (\ref{106}) $ and using that $ \varepsilon < \eta /2 $, there are less than $r \eta n^{2}$ edges of any of these three types. They may be chosen in the $r$-coloring of $G$ in at most $\binom{n^{2}/2}{r \eta n^{2}}$ ways and may be colored in at most $r^{r \eta n^{2}}$ ways. 

As a consequence, the number of $r$-colorings of $ G $ that give rise to the partition $ V(G)=V_{1}  \cup \cdots \cup V_{m} $ and the multicolored cluster graph $ H(\eta) $ is bounded above by
\begin{eqnarray} {\label{01}}
\binom{n^{2}/2}{r\eta n^{2}} \cdot r^{r \eta n^{2}} \cdot \left( \prod_{e \in E(H)}|L_{e}|  \right)^{(\frac{n}{m})^{2}} & \stackrel{\eqref{eq:entropy1}}{\leq} &  2^{H(2r\eta)\frac{n^{2}}{2}}  \cdot r^{r \eta n^{2}} \cdot \left( \prod_{i=1}^{r}i^{e_{i}(H)}  \right)^{(\frac{n}{m})^{2}},
\end{eqnarray}
where $e_i(H)$ is the number of edges of $H$ whose lists have size equal to $i$.

Since $m \leq M$, there are at most $M^{n} $ distinct $\eps$-regular partitions $ V(G)=V_{1}  \cup \cdots \cup V_{m} $. Summing (\ref{01}) over all possible partitions and all possible multicolored cluster graphs $ H $, the number of  $ (K_{k}, \geq s) $-free $r$-colorings of $ G $ is at most
\begin{eqnarray} \label{02}
M^{n}\cdot 2^{H(2r\eta)\frac{n^{2}}{2}} \cdot  r^{r \eta n^{2}} \cdot \sum_{H}\left( \prod_{i=1}^{r}i^{e_{i}(H)}  \right)^{(\frac{n}{m})^{2}} . 
\end{eqnarray}
Note that, in this expression, we have $m=m(H)=|V(H)|$. We wish to bound the value of $ \prod_{i=1}^{r}i^{e_{i}(H)}$.

First observe that 
\begin{eqnarray} \label{eq:tu1}
  e_{s}(H)+ \cdots +e_{r}(H) &\leq&   \ex(m,K_k).
  \end{eqnarray}
Otherwise, by Tur\'an's Theorem, the multicolored cluster graph $H$ would contain a copy of $K_k$ such that every edge has a list of size at least $s$. This clearly induces a colored homomorphism of some pattern of $K_k$ with at least $s$ classes into $H$, so that by Lemma~\ref{homocol} $G$ would contain a copy of $K_k$ whose set of edges is colored with $s$ colors, a contradiction. More generally, we prove the following:
\begin{claim} \label{claim}
Let $H$ be a $ (K_{k},\geq s) $-free multicolored graph where $E_j=E_j(H)$ is the number of edges in $H$ with list of size $j$ and $e_j(H)=|E_j(H)|$ for any $j \in [r]$. For any fixed $i$ such that $A(k,k-i) \leq s-1$, there is no copy of $K_k$ for which all edges lie in $ E_{s-A(k,k-i)} \cup \cdots \cup E_{r}$ and at least $A(k,k-i)$ edges lie in $ E_{s}\cup \cdots \cup E_{r}$. Therefore, if $i \leq k-2$ and $A(k,k-i) \leq s-1$, the following inequality holds:
\begin{equation*}
\frac{k-i-1}{k-i}\cdot  \left(e_{s-A(k,k-i)}(H) + \cdots + e_{s-1}(H)\right) + e_{s}(H)+ \cdots +e_{r}(H) \leq \ex(m,K_k).
\end{equation*}
\end{claim}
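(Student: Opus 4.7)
The proof of Claim~\ref{claim} naturally splits into two parts: the non-existence statement, and the subsequent derivation of the inequality by a max-cut argument. For the second part I will use the first essentially unchanged.

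\textbf{The non-existence statement.} Suppose, for contradiction, that $H$ contains a copy $K$ of $K_{k}$ in which every edge has list of size at least $s-A(k,k-i)$ and at least $A(k,k-i)$ edges have list of size at least $s$. I plan to exhibit a choice of color $c_{e}\in L_{e}$ for each $e\in E(K)$ with $|\{c_{e}:e\in E(K)\}|\geq s$. Such a choice defines a pattern $P$ of $K_{k}$ with at least $s$ classes together with a colored homomorphism of $(K_{k},P)$ into $H$, so Lemma~\ref{homocol} yields a copy of $K_{k}$ in $G$ colored with at least $s$ colors, contradicting the $(K_{k},\geq s)$-freeness of the fixed coloring. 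The color choice is greedy. First, process the \emph{medium-list} edges of $K$ (those with list size in $[s-A(k,k-i),s-1]$) one at a time, always selecting an unused color; as long as fewer than $s-A(k,k-i)$ colors have been used, every medium list still contains an unused color, so this phase yields $\min\{m_{0},s-A(k,k-i)\}$ distinct colors, where $m_{0}$ denotes the number of medium-list edges in $K$. Next, process the \emph{heavy-list} edges (list size at least $s$); while fewer than $s$ colors have been used, each heavy list still contains an unused color, so we can continue adding new colors. Using $\binom{k}{2}\geq s$ and the assumption that at least $A(k,k-i)$ heavy edges are available, a short case split on whether $m_{0}\geq s-A(k,k-i)$ confirms that the procedure reaches at least $s$ distinct colors.

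\textbf{The inequality.} Abbreviate $X=e_{s-A(k,k-i)}(H)+\cdots+e_{s-1}(H)$ and $Y=e_{s}(H)+\cdots+e_{r}(H)$, and let $H_{\mathrm{med}}$ and $H_{Y}$ denote the corresponding edge-disjoint subgraphs of $H$. Choose a partition of $V(H)$ into $k-i$ classes maximizing the number of edges of $H_{\mathrm{med}}$ crossing the partition; by the standard max-$(k-i)$-cut bound, at most $X/(k-i)$ edges of $H_{\mathrm{med}}$ lie within classes. Writing $H'_{\mathrm{med}}$ for the subgraph of crossing edges, $H'_{\mathrm{med}}$ is $(k-i)$-partite and in particular $K_{k-i+1}$-free, and has at least $\frac{k-i-1}{k-i}X$ edges. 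I claim that $H'_{\mathrm{med}}\cup H_{Y}$ is $K_{k}$-free. Indeed, any copy $K$ of $K_{k}$ in this graph would also be a copy in $H_{\mathrm{med}}\cup H_{Y}$, so the non-existence statement would force $|E(K)\cap H_{Y}|<A(k,k-i)$, hence $|E(K)\cap H'_{\mathrm{med}}|>\binom{k}{2}-A(k,k-i)=\ex(k,K_{k-i+1})$; Tur\'an's theorem applied inside the $k$-vertex graph $K$ would then produce a copy of $K_{k-i+1}$ entirely in $H'_{\mathrm{med}}$, a contradiction. Applying Tur\'an's theorem to $H'_{\mathrm{med}}\cup H_{Y}$ on $m$ vertices yields $\frac{k-i-1}{k-i}X+Y\leq |E(H'_{\mathrm{med}}\cup H_{Y})|\leq \ex(m,K_{k})$, which is exactly the stated inequality.

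\textbf{Main obstacle.} The subtle point is the greedy color selection in the non-existence part: one has to bookkeep the subcases where $m_{0}\geq s-A(k,k-i)$ and $m_{0}<s-A(k,k-i)$ separately, and in the second verify that the available heavy edges (of which there are at least $A(k,k-i)$, each with list of size at least $s$) supply enough fresh colors to raise the count from $m_{0}$ up to $s$; this ultimately hinges on $s\leq\binom{k}{2}$. Once this bookkeeping is in place, the passage from non-existence to the fractional inequality via max-cut is routine.
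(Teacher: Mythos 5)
Your proof is correct and follows essentially the same route as the paper's: the greedy color-selection argument (first exhausting the medium-list edges, then the heavy-list edges, with a case split on whether the medium phase already yields $s-A(k,k-i)$ colors) is the paper's argument, and the passage to the inequality via a maximum $(k-i)$-cut of the medium-list edges, the observation that a $(k-i)$-partite graph on $k$ vertices has at most $\ex(k,K_{k-i+1})=\binom{k}{2}-A(k,k-i)$ edges, and a final application of Tur\'an's theorem to the union with $H_Y$ is exactly the paper's Lemma~\ref{meulema} step. The only cosmetic difference is that you phrase the $K_k$-freeness of $H'_{\mathrm{med}}\cup H_Y$ contrapositively, while the paper phrases it directly; the content is identical.
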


Note that $A(k,k-i) = i$ for $i \leq \lfloor k/2 \rfloor$, so in this case the inequality in the claim becomes
\begin{equation*}
\frac{k-i-1}{k-i}\cdot  \left(e_{s-i}(H) + \cdots + e_{s-1}(H)\right) + e_{s}(H)+ \cdots +e_{r}(H) \leq \ex(m,K_k).
\end{equation*}

\begin{proof} 
Assume that there is a copy of $ K_{k} $ for which all edges lie in $ E_{s-A(k,k-i)} \cup \cdots \cup E_{r}$ and at least $A(k,k-i)$ edges lie in $ E_{s}\cup \cdots \cup E_{r}$. Let $p$ be the number of edges in $ E_{s-A(k,k-i)} \cup \cdots \cup E_{s-1}$ in this copy of $K_k$. Proceeding greedily (and starting with the edges in $E_{s-A(k,k-i)} \cup \cdots \cup E_{s-1}$), we may find a copy of $K_k$ such that $\alpha \geq \min\{s-A(k,k-i),p\}$ additional distinct colors appear in the edges $E_{s-A(k,k-i)} \cup \cdots \cup E_{s-1}$. If $\alpha \geq s-A(k,k-i)$, then at least $s-\alpha$ distinct colors may be chosen in the edges in $ E_{s}\cup \cdots \cup E_{r}$, as this union contains at least $A(k,k-i)$ edges, each with a list of size at least $s$. If $\alpha=p<s-A(k,k-i)$, then the number of edges of the copy in $ E_{s}\cup \cdots \cup E_{r}$ is $\binom{k}{2}-p \geq s-p$, so that at least $s-\alpha$ additional distinct colors may be chosen for edges in this set. In both cases, we get a copy of $K_k$ colored with $s$ or more colors, the desired contradiction.  

Next, assuming that $k-i \geq 2$ and that $A(k,k-i) \leq s-1$, let $E' \subseteq E_{s-A(k,k-i)} \cup \cdots \cup E_{s-1}$ be maximum with the property that the edges in $ E'$ induce a $(k-i)$-partite subgraph of $H$. The number of edges of $E'$ in a copy of $K_k$ is at most $\binom{k}{2}-A(k,k-i)$, so that $|E'|+e_{s}(H)+ \cdots + e_{r}(H) \leq \ex(m,K_k)$ by the previous discussion. By Lemma~\ref{meulema}, we know that $ |E'|\geq (k-i-1) \cdot\left|E_{s-A(k,k-i)} \cup \cdots \cup E_{s-1} \right|/(k-i)$, which gives the desired result. 
\end{proof}

For a multicolored cluster graph $H$, let 
\begin{equation}\label{def_beta}
 \beta= \beta(H)=  \frac{1}{m(H)^2}\left( \ex(m(H),K_k)  - \sum_{j=s}^{r}e_{j}(H)  \right) \geq 0.
\end{equation}
To find an upper bound in (\ref{02}) on the number of $\P$-free $r$-colorings of $G$ , we use~\eqref{eq:tu1} and ~\eqref{def_beta} in the product 
\begin{eqnarray}\label{eq_UB}
\left(\prod_{e \in E(H)}|L_{e}|\right)^{(\frac{n}{m})^{2}} &=& \prod_{i=2}^r i^{e_i(H)} \nonumber \\
&\leq& 2^{e_2(H)} \cdot 3^{e_3(H)} \cdots (s-1)^{e_{s-1}(H)} \cdot r^{\ex(n,K_k)-\beta(H) n^2}.
\end{eqnarray}
Maximizing this product is the same as maximizing 
\begin{equation}\label{eq_LP}
\log\left(2^{e_2(H)} \cdot 3^{e_3(H)} \cdots (s-1)^{e_{s-1}(H)} \right)=\log{2}\cdot e_2(H) +\cdots +\log(s-1)\cdot e_{s-1}(H).
\end{equation}
Moreover, with~\eqref{def_beta}, the inequalities in Claim~\ref{claim} lead to the following constraints. For all $i \in [k-1]$ such that $A(k,k-i) \leq s-1$, we get inequalities of the form
\begin{equation}\label{linear_constraints}
\frac{k-i-1}{k-i}\cdot ( e_{s-A(k,k-i)}(H)+ \cdots + e_{s-1}(H))  \leq \beta m^2.
\end{equation}

For $s \leq s_{0}(k) $, let $ i^{*} $ be the least value of $ i $ such that 
$$ s-A(k,k-i) \leq 2,$$ 
as defined in the introduction. The fact that $s \leq s_{0}(k)$ implies that $ i^{*} \leq k-2$. The constraints (\ref{linear_constraints}) for $i \in [i^{\ast}]$ may be written as
\begin{equation}\label{linear_constraints2}
\left\{
\begin{aligned}
\frac{k-2}{k-1}\cdot ( e_{s-A(k,k-1)}(H)+ \cdots + e_{s-1}(H)) & \leq &\beta m^2 \\
\frac{k-3}{k-2}\cdot ( e_{s-A(k,k-2)}(H)+ \cdots + e_{s-1}(H)) & \leq &\beta m^2 \\
\vdots \hspace{2cm}\vdots  \hspace{2cm} \vdots \hspace{2cm} & \vdots & \hspace{1cm} \vdots  \\
\frac{k-i^{*}-1}{k-i^{*}}\cdot ( e_{s-A(k,k-i^{*})}(H)+ \cdots + e_{s-1}(H)) & \leq &\beta m^2.
\end{aligned}
\right.
\end{equation}

This leads to a linear program with objective function~\eqref{eq_LP}, and constraints~\eqref{linear_constraints2} and $e_2(H),\ldots,e_{s-1}(H) \geq 0$ (and possibly $e_1(H)$ if $A(k,k-i^{*})\geq s-1$). It is easy to see that the optimum is obtained for $e_{s-1}(H)=\frac{k-1}{k-2} \cdot \beta m^2$ and $e_{s-A(k,k-i+1)-1}(H)= \left(\frac{k-i}{k-i-1}-\frac{k-i+1}{k-i} \right) \cdot  \beta m^2=\frac{1}{(k-i-1)(k-i)} \cdot \beta m^2$ for $ i \in \{2, \ldots, i^\ast \}$. Note that in this case we have
\begin{eqnarray} \label{eq:2beta}
\sum_{i=1}^{i^{*}} e_{s-A(k,k-i+1)-1}(H)= 2 \beta m^2.
\end{eqnarray}

To simplify the expressions below, for $k \geq 3$ and $2 \leq i \leq k-2$, define the quantities
\begin{equation}\label{eq_sxi}
s_i= s-A(k,k-i+1)-1 \textrm{ and } \xi_i=\frac{1}{(k-i-1)(k-i)}.
\end{equation}

Plugging the optimal solution of the linear program into~(\ref{02}), we obtain (for $s \leq s_0(k)$),
\begin{eqnarray} \label{03}
&&M^{n}\cdot 2^{H(2r\eta)\frac{n^{2}}{2}} \cdot  r^{r \eta n^{2}} \cdot \sum_{H}\left( \prod_{i=1}^{r}i^{e_{i}(H)}  \right)^{(\frac{n}{m})^{2}}  \leq \nonumber \\ 
&&r^{(H(2r\eta)+ 2r \eta) \frac{n^{2}}{2}} \cdot  \sum_{H} \left(\frac{ \left( \prod^{i^\ast}_{i=2} s_i^{ \xi_i} \right) (s-1)^{\frac{k-1}{k-2}} }{r}\right)^{\beta(H) n^{2}} r^{\ex(n,K_k)}. 
\end{eqnarray}

By our choice of $r_0=r_0(k,s)$ (see~\eqref{def_r0}), given any $\beta_0>0$, there is $\eta>0$ such that
\begin{equation}\label{eq_aux1}
r^{2H(2r\eta)+ 2r \eta} \left(\frac{ \left( \prod^{i^\ast}_{i=2} s_i^{ \xi_i} \right) \cdot (s-1)^{ \frac{k-1}{k-2} } }{r}\right)^{\beta_0} <1.
\end{equation}
Recall that we are using $\beta_0<\delta/(8k+2)$ and that $\eta>0$ satisfies~\eqref{eq_aux1} for this value of $\beta_0$.

We claim that there exists a multicolored cluster graph $H$ such that $\beta(H) < \beta_0$. Indeed, if this does not happen, the inequality~\eqref{03} would be bounded above by
\begin{eqnarray} \label{eq:107}
&&r^{(H(2r\eta)+ 2r \eta) \frac{n^{2}}{2}} \cdot 2^{rM^2/2} \left(\frac{ \left(  \prod^{i^\ast}_{i=2} s_i^{ \xi_i} \right) \cdot (s-1)^{ \frac{k-1}{k-2} } }{r}\right)^{\beta_0 n^{2}} \cdot r^{\ex(n,K_k)} \nonumber\\
&&\stackrel{(n \gg 1,\eqref{eq_aux1})}{<} r^{\ex(n,K_k)},
\end{eqnarray}
a contradiction (we are using that the number of distinct multicolored cluster graphs is bounded above by $2^{rM^2/2}$, which is less than $r^{H(2r\eta)n^2/2}$ for $n$ sufficiently large).

So, let $H$ be a multicolored cluster graph for which $\beta=\beta(H) < \beta_0$. Consider the spanning subgraph $H'$ of $H$ with edge set $E_{s} \cup \cdots \cup E_{r}$. This graph contains $\ex(m,K_k)-\beta m^{2}$ edges. We may apply Theorem~\ref{theorem:stability_furedi2} with $ t=\beta m^{2} $. By removing at most $\beta m^{2}$ edges of $ H' $, we produce a $ (k-1) $-partite subgraph $ H'' $. Let $U_{1}  \cup \cdots \cup U_{k-1} $ be the resulting partition of $V(H'')=V(H)$ such that  
\begin{equation}\label{eq_partition}
\sum_{i=1}^{k-1}e_{H'}(U_{i}) \leq \beta m^{2}.
\end{equation}
By Claim~\ref{claim} for $i=k-2$, we know that $ e_{2}(H) + \cdots +e_{s-1}(H) \leq 2 \beta m^{2}$ for $s \leq s_0$.  To bound $ e_{1}(H) $, we apply Corollary \ref{lema_two} to the $ (k-1) $-partite graph $ H''$. This lemma ensures that we may not have $ e_{1}(H) \geq (2k-1) \cdot 2\beta m^{2} =(4k-2) \beta m^{2}$, otherwise adding $E_1$ to $H''$ would produce a copy of $ K_{k} $ for which exactly one of the edges would have a list of size one and all other edges would have lists with $ s $ or more colors, leading to the forbidden pattern. Therefore we must have $ e_{1}(H) < (4k-2) \beta m^{2}$. This gives us an upper bound on the number of edges of $H$ with color lists of size up to $s-1$:
\begin{equation}\label{UB_edges}
e_{1}(H)+ \cdots + e_{s-1}(H) \leq  4k \beta m^{2}. 
\end{equation}	
	
Let $W_{i} = \bigcup_{j \in U_{i}}V_{j}$, where $ i \in [k-1] $. We shall prove that $ V(G) = W_{1} \cup \cdots \cup W_{k-1}$ satisfies the conclusion of the theorem. Edges of $G$ with both endpoints in a same class $W_i$ may come from three sources: edges of $G$ that are not represented in $H$; edges of $G$ in pairs $(V_s,V_t)$ such that $\{s,t\} \in E(H) \setminus E(H')$; edges of $G$ in pairs $(V_s,V_t)$ that correspond to edges in $E(H')$ with both endpoints in a same class $U_j$. 
 By~(\ref{104}) -- (\ref{106}) and \eqref{UB_edges}, we obtain
\begin{eqnarray*}
	\sum_{i=1}^{k-1} e_{G}(W_{i}) & \leq & r\eta n^2 + \left(\sum_{i=1}^{k-1} e_{H'}(U_{i})+e_{1}(H)+ \cdots + e_{s-1}(H) \right) \cdot \left( \frac{n}{m}\right)^{2}\\
& \leq & r\eta n^2 + \left(\beta_0 m^{2}+4k \beta_0 m^{2} \right) \cdot \left( \frac{n}{m}\right)^{2} \leq \delta n^2,
\end{eqnarray*}
by our choice of $\beta_0$ and $\eta>0 $.

We now consider the case when $s>s_0(k)$. All the inequalities in~\eqref{linear_constraints2} hold up to $i^\ast=k-2$, but in this case $s-A(k,2)>2$, so that the variables $e_2(H),\ldots,e_{s-A(k,2)-1}(H)$ are not bounded by the linear constraints. The constraints become
\begin{equation}\label{linear_constraints3}
\left\{
\begin{aligned}
\frac{k-2}{k-1}\cdot e_{s-1}(H) & \leq &\beta m^2 \\
\frac{k-3}{k-2}\cdot ( e_{s-A(k,k-2)}(H)+  e_{s-1}(H)) & \leq &\beta m^2 \\
\vdots \hspace{2cm}\vdots  \hspace{2cm} \vdots \hspace{2cm} & \vdots & \hspace{2cm} \vdots  \\
\frac{1}{2}\cdot ( e_{s-A(k,2)}(H)+ \cdots + e_{s-1}(H)) & \leq &\beta m^2.
\end{aligned}
\right.
\end{equation}

Consider $1\leq j \leq k-1$ and $2 \leq p \leq k-1$ Assume that $s \leq \binom{k}{2}-b(k,p,j) +2$, where $b(k,p,j)$ comes from Lemma~\ref{lema_four}. We divide the set of multicolored cluster graphs into two classes, according to whether $\beta(H) \geq j(p-1)/(4p(k-1)^2)$ or  $\beta(H) <    j(p-1)/(4p(k-1)^2)$.

If $\beta=\beta(H) \geq  j(p-1)/(4p(k-1)^2)$, we have, for $s_i$ and $\xi_i$ as in~\eqref{eq_sxi},
\begin{eqnarray}\label{04}
 && \prod_{i=1}^{r}i^{e_{i}(H)} \nonumber \\
&\stackrel{(\ref{eq:2beta})}{\leq}&  \left( s-A(k,2)-1 \right)^{\binom{m}{2}-(\ex(m,K_k) + \beta m^2)} \cdot \left(\frac{\left( \prod^{k-2}_{i=2}s_i^{\xi_i} \right) \cdot (s-1)^{ \frac{k-1}{k-2} } }{r}\right)^{\beta m^2} \cdot r^{\ex(m,K_k)} \nonumber\\
 &\leq&  \left(s-A(k,2)-1 \right)^{\frac{m^2}{2(k-1)}} \left(\frac{\left( \prod^{k-2}_{i=2}s_i^{\xi_i} \right) \cdot (s-1)^{ \frac{k-1}{k-2} } }{ \left(s-A(k,2)-1 \right) \cdot r}\right)^{\beta m^2} \cdot r^{\ex(m,K_k)} \nonumber\\
 &\leq& \left(\frac{\left( s-A(k,2)-1 \right)^{\frac{2p(k-1)}{j(p-1)}} \cdot \left(\prod^{k-2}_{i=2}s_i^{\xi_i} \right) \cdot (s-1)^{ \frac{k-1}{k-2} } }{ \left( s-A(k,2)-1\right) \cdot r}\right)^{\frac{j(p-1)m^2}{4p(k-1)^2}} \cdot r^{\ex(m,K_k)}. 
\end{eqnarray}

Next suppose that $\beta=\beta(H) < j(p-1)/(4p(k-1)^2)$. As above, let $H'$ be the spanning subgraph of $H$ with edge set $E_s \cup \cdots \cup E_r$ and let $H''$ be a maximum $(k-1)$-partite subgraph of $H'$. Note that $|E(H')|=\ex(m,K_k)-\beta m^2$. Apply Theorem~\ref{theorem:stability_furedi2} and define $\gamma>0$ such that
$$|E(H'')| =\ex(m,K_k)-\gamma m^2 \geq |E(H')|-\beta m^2 \geq \ex(m,K_k)- 2\beta m^2,$$
where $\beta \leq \gamma \leq 2 \beta$. In particular, the set $\widetilde{E}=E(H')-E(H'') \subset E_s \cup \cdots \cup E_r$ has cardinality $(\gamma-\beta)m^2$.
Let $E'=E_2 \cup  \cdots \cup E_{s - A(k,2) - 1}$. We may apply Lemma~\ref{lema_four} to the edge set $\widetilde{E} \cup E'$ for our values of $j$ and $p$ to this value of $ \gamma$. If $|E'|  + |\widetilde{E}| \geq \left(p(k-1)/(j(p-1))+1 \right) \gamma m^2$, then the multicolored cluster graph obtained by adding the edges in $E' \cup \widetilde{E}$ to $H''$ would contain a copy of $K_k$ with at most $b(k,p,j)$ edges in $E'$. If $|E' \cup \widetilde{E} |=1$, we would greedily build an $s$-colored copy of $K_k$, starting with the edge in $E' \cup \widetilde{E}$, a contradiction. If $|E' \cup \widetilde{E}| \geq 2$, we obtain a copy of $K_k$ with at least $(2+\min\{s-2,\binom{k}{2}-b(k,p,j)\})$ colors. This is a contradiction whenever $s \leq \binom{k}{2}-b(k,p,j) +2$, which is precisely the hypothesis in this case.

As a consequence, we have
\begin{equation*}
e_2(H)+\cdots+e_{s-A(k,2) - 1}(H)+|\widetilde{E}| \leq  \left(\frac{p(k-1)}{j(p-1)}+1 \right) \gamma m^2,
\end{equation*}
so that 
\begin{eqnarray}\label{new_equation}
e_2(H)+\cdots+e_{s-A(k,2) - 1}(H) &\leq&  \left(\frac{p(k-1)}{j(p-1)}+1 \right) \gamma m^2 - |\widetilde{E}| \nonumber \\
&=&  \frac{p(k-1)}{j(p-1)}\gamma m^2 + \beta m^2 \nonumber \\
&\leq& \left(\frac{2p(k-1)}{j(p-1)} +1 \right) \beta m^2.
\end{eqnarray}

Using an argument as in~\eqref{03}, but applying the additional inequality~\eqref{new_equation} to bound $e_2(H),\ldots,e_{s-A(2)-1}(H)$, we get, again with the notation in~\eqref{eq_sxi} and using the fact that $e_{s-A(k,2)}(H)+\cdots+e_{s-1}(H)=2 \beta m^2$, when the optimum solution of the linear program is achieved, it follows that
\begin{equation}\label{05}
\prod_{i=1}^{r}i^{e_{i}(H)} \leq \left(\frac{\left( s-A(k,2)-1\right)^{\frac{2p(k-1)}{j(p-1) }+1} \cdot \left( \prod^{k-2}_{i=2}s_i^{\xi_i} \right) \cdot (s-1)^{\frac{k-1}{k-2} } }{r}\right)^{\beta m^{2}} \cdot r^{\ex(m,K_k)}.
\end{equation}

It is clear that~\eqref{04} is less than~\eqref{05} for any fixed pair $(j,p)$. Therefore, by choosing $r_{0}(k,s)$ greater than
$$\left( s-A(k,2)-1\right)^{L_{opt}(k,s) }  \cdot \left( \prod^{k-2}_{i=2}\left( s-A(k,k-i+1)-1  \right)^{ \frac{1}{(k-i-1)(k-i)}} \right) \cdot (s-1)^{\frac{k-1}{k-2} },$$  
where $L_{opt}(k,s)$ is the least possible value of $L(k,s,p,j) = 1+ \frac{2p(k-1)}{j(p-1)}$ subject to conditions~(\ref{eq:opt2aa}) and~(\ref{eq:opt2b}), we may proceed as in the case $s \leq s_0(k)$. That is, we may again fix $\beta_0<\delta/(8k+2)$ and  choose $\eta>0$ appropriately, and consider whether $\beta(H) \geq \beta_0$ for all multicolored cluster graphs $H$ (in which case we reach a contradiction) or whether there is a multicolored cluster graph $H$ for which $\beta<\beta_0$ (in which case we get the desired partition), see the inequalities (\ref{03}), (\ref{eq_aux1}) and (\ref{eq:107}). 

To conclude the proof of Theorem~\ref{stability_thm}, we determine the value of $L_{opt}(k,s)$. This is the least value of
\begin{eqnarray}\label{eq:0pt11}
L(k,s,p,j) = 1+ \frac{2p(k-1)}{j(p-1)},
\end{eqnarray}
where $j \in [k-1]$, $p \in \{2, \ldots, k-1\}$ and
\begin{eqnarray} \label{eq:opt2aa}
 s &\in& \left[\binom{k}{2} - \lfloor k/2\rfloor \lceil k/2\rceil + 3,
\binom{k}{2}\right]
\end{eqnarray} 
 satisfy
\begin{eqnarray} \label{eq:opt2b}
b(k,p,j) = \mbox{min } \left\{ j\binom{p}{2}, \left\lfloor \frac{k}{p} \right\rfloor \binom{p}{2} +\binom{k-\lfloor k/p \rfloor p}{2} \right\}
\leq \binom{k}{2} - s +  2.
\end{eqnarray}
Clearly, choosing $j$ and $p$ large would be good to minimize~\eqref{eq:0pt11}. The following claim summarizes our result. 

\begin{claim}\label{claim:lopt}
For all $s>s_0$, the quantity $L_{opt}(k,s)$ satisfies the following.
\begin{itemize}
\item[(a)] If $s \leq s_1$, then there is $p^\ast \in \{2,\ldots,k-1\}$ such that $L_{opt}(k,s)=L(k,s,p^\ast,k-1)$. Moreover, 
$$3< L_{opt}(k,s) \leq 5$$
\item[(b)] If $s > s_1$, then there is $j^\ast \in \{2,\ldots,k-2\}$ such that $$L(k,s,2,j^\ast)=L_{opt}(k,s) = 1 +  \frac{4(k-1)}{\binom{k}{2} - s +2 } > 9.$$
\end{itemize}
\end{claim}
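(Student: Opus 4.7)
The plan is to analyze the feasibility region for $(p,j)$ imposed by~\eqref{eq:opt2} and then exploit the monotonicity of $L(k,s,p,j)$. Set $c := \binom{k}{2} - s + 2$ and abbreviate $\beta(k,p) := \lfloor k/p\rfloor\binom{p}{2} + \binom{k-\lfloor k/p\rfloor p}{2}$ (the $j$-independent second term inside the minimum defining $b(k,p,j)$). The function $L(k,s,p,j) = 1 + 2p(k-1)/[j(p-1)]$ is decreasing in both $j$ and $p$ (for $p \geq 2$), so the minimum is attained with $j$ and $p$ as large as feasibility allows. The constraint $b(k,p,j) \leq c$ splits in two cases: either $\beta(k,p) \leq c$, and any $j \in [k-1]$ is admissible, or $\beta(k,p) > c$, in which case $j \leq c/\binom{p}{2}$ is required. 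A direct combinatorial check shows that $\beta(k,p)$ is non-decreasing in $p$.

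For part (a), the hypothesis $s_0 < s \leq s_1$ is equivalent to $\lfloor k/2\rfloor \leq c < \lfloor k/2\rfloor\lceil k/2\rceil$. Since $\beta(k,2) = \lfloor k/2\rfloor \leq c$, the value $p^\ast$ is well-defined with $p^\ast \geq 2$. By monotonicity of $\beta$, the pair $(p,k-1)$ is feasible for every $p \in [2,p^\ast]$, and on this range $L(k,s,p,k-1) = 1 + 2p/(p-1)$ is minimized at $p^\ast$, with value $1 + 2p^\ast/(p^\ast-1)$. For $p > p^\ast$, feasibility forces $j \leq c/\binom{p}{2}$ and hence $L(k,s,p,j) \geq 1 + p^2(k-1)/c$. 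Combining the upper bound $c < \beta(k,p^\ast+1) \leq kp^\ast/2 + \binom{p^\ast}{2}$ with the elementary inequality $y^2(k+y-1) \leq (y+1)^2(y-1)(k-1)$ (for integers $2 \leq y \leq k-2$ and $k \geq 4$), applied with $y = p^\ast$, one concludes that $L(k,s,p,j) \geq L(k,s,p^\ast,k-1)$ in this second regime as well. Hence $L_{opt}(k,s) = L(k,s,p^\ast,k-1) = 3 + 2/(p^\ast-1)$, and the bounds $3 < L_{opt}(k,s) \leq 5$ follow at once from $2 \leq p^\ast \leq k-1$.

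For part (b), the hypothesis $s > s_1$ gives $c < \lfloor k/2\rfloor = \beta(k,2)$, so by monotonicity $\beta(k,p) > c$ for every $p \geq 2$. Feasibility forces $j \leq c/\binom{p}{2}$ and therefore $L(k,s,p,j) \geq 1 + p^2(k-1)/c$; this lower bound is minimized over $p \geq 2$ at $p = 2$. Since $c$ is a positive integer, the pair $(p,j) = (2,c)$ actually attains the bound, giving $L_{opt}(k,s) = 1 + 4(k-1)/c$ with $j^\ast = c$. The bound $L_{opt} > 9$ follows from $c \leq \lfloor k/2\rfloor - 1 \leq (k-2)/2$, which yields $4(k-1)/c \geq 8(k-1)/(k-2) > 8$. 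The range $j^\ast \in \{2,\ldots,k-2\}$ holds since $c \geq 2$ (as $s \leq \binom{k}{2}$) and $c \leq (k-2)/2 \leq k-2$.

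The main obstacle is the algebraic comparison in part (a) between the two feasibility regimes. Once reduced to the linear-in-$k$ inequality $y^2(k+y-1) \leq (y+1)^2(y-1)(k-1)$ via the upper bound on $c$ coming from $\beta(k,p^\ast+1) > c$, it must be checked carefully at the boundary $y = 2$ (where the gap is tightest), in which case it reads $4(k+1) \leq 9(k-1)$, equivalent to $5k \geq 13$; the inequality becomes looser as $y$ grows. The monotonicity of $\beta(k,p)$ also requires a brief combinatorial argument, but is elementary.
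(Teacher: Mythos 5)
Your proof is correct, and it follows the same overall strategy as the paper --- monotonicity of $L$ in $p$ and $j$, the same treatment of part~(b), and in part~(a) identifying $p^\ast$ as the largest $p$ for which $\beta(k,p):=\lfloor k/p\rfloor\binom{p}{2}+\binom{k-\lfloor k/p\rfloor p}{2}\le c$ --- but it closes part~(a) with a different comparison argument. You split the feasible pairs by $p\le p^\ast$ versus $p>p^\ast$; in the second regime you derive $L\ge 1+(p^\ast+1)^2(k-1)/c$ from $j\le c/\binom{p}{2}$ and finish by combining $c<\beta(k,p^\ast+1)$ with the cubic inequality $y^2(k+y-1)\le(y+1)^2(y-1)(k-1)$, whose tightest case $y=2$ reduces to $5k\ge 13$. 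The paper instead splits by $j>k/p$ versus $j\le k/p$: since $\beta(k,p)\le\lfloor k(p-1)/2\rfloor$, having $j\binom{p}{2}$ be the smaller term in $b(k,p,j)$ forces $j\le k/p$, so if $j>k/p$ the pair $(p,k-1)$ is also feasible and dominates $(p,j)$; and when $j\le k/p\le k/2$ the comparison with $L(p^\ast,k-1)$ follows from the ratio chain $\tfrac{p^\ast}{p^\ast-1}\cdot\tfrac{k}{2(k-1)}\le\tfrac{k}{k-1}<\tfrac{k-1}{k-2}\le\tfrac{p'}{p'-1}$, which is independent of the specific $p^\ast,p'$ and needs no polynomial manipulation. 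Both routes are valid and of comparable length; the paper's chain is slightly cleaner, while yours requires checking the cubic inequality explicitly. One minor remark: the bound you invoke, $\beta(k,p^\ast+1)\le kp^\ast/2+\binom{p^\ast}{2}$, is looser than the sharper estimate $\beta(k,p)\le\lfloor k(p-1)/2\rfloor$ (which gives $\beta(k,p^\ast+1)\le kp^\ast/2$); your version still suffices because the resulting polynomial inequality holds anyway, but the tighter bound would have spared a bit of algebra.
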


To prove the claim, we start with part (b), where $s \geq \binom{k}{2} - \left\lfloor \frac{k}{2} \right\rfloor +3$. First note that
$$
\left\lfloor \frac{k}{p} \right\rfloor \binom{p}{2} +\binom{k-\lfloor k/p \rfloor p}{2} \geq \left \lfloor \frac{k}{2} \right\rfloor>\binom{k}{2}-s+2.
$$
In particular, to satisfy~\eqref{eq:opt2b}, we need $b(k,p,j) = j \binom{p}{2} \leq \binom{k}{2}-s+2$. As a consequence, for any pair $(p,j)$ such that~\eqref{eq:opt2b} holds, we have
$$L(k,s,p,j) \geq 1+ \frac{2p(k-1)}{(p-1) \left(\binom{k}{2}-s+2 \right)/\binom{p}{2}}=1+\frac{p^2(k-1)}{\binom{k}{2}-s+2} \geq L(k,s,2,\binom{k}{2}-s+2).$$ 
Since the pair $(p,j)=(2,\binom{k}{2} - s +2)$ satisfies~\eqref{eq:opt2b}, we deduce that $L_{opt}(k,s)=L(k,s,2,\binom{k}{2}-s+2)$, so that
\begin{eqnarray*}
L_{opt}(k,s) &=&  1 +  \frac{4(k-1)}{\binom{k}{2} - s +2 } \\
&\geq&  1 + \frac{4(k-1)}{ \binom{k}{2} - (\binom{k}{2} - \left\lfloor \frac{k}{2} \right\rfloor +3) + 2 }\\
&\geq &1 + \frac{8(k-1)}{k-2}  >9.
\end{eqnarray*}
This proves part (b). 

We now consider part (a). Fix $p \geq 2$ and fix a pair $(p,j)$ that satisfies~\eqref{eq:opt2b}. Since
$$
\left\lfloor \frac{k}{p} \right\rfloor \binom{p}{2} +\binom{k-\lfloor k/p \rfloor p}{2}
\leq \left\lfloor \frac{k(p-1)}{2} \right\rfloor,
$$
if the minimum in~\eqref{eq:opt2b} is attained by $ j\binom{p}{2}$, then 
$$
 j\binom{p}{2} \leq  \frac{k(p-1)}{2},
 $$
which implies $j \leq k/p$. This implies that, if there exists $j>k/p$ such that $(p,j)$ satisfies~\eqref{eq:opt2b}, then we must have
$$ \left\lfloor \frac{k}{p} \right\rfloor \binom{p}{2} +\binom{k-\lfloor k/p \rfloor p}{2} \leq \binom{k}{2} - s +  2,$$
so that the pair $(p,k-1)$ satisfies~\eqref{eq:opt2b} and leads to $L(k,s,p,k-1) \leq L(k,s,p,j)$.

For $j=k-1$, let $p^*$ be the largest value of $p$ such that $(p,k-1)$ satisfies~\eqref{eq:opt2b}. This is well defined because $(2,k-1)$ satisfies~\eqref{eq:opt2b} for $s \leq  \binom{k}{2} - \left\lfloor \frac{k}{2} \right\rfloor +2=s_1(k)$. Towards finding a suitable pair $(p,j)$ that minimizes  $L(k,s,p,j)$, the only other candidates are pairs $(j',p')$, satisfying~\eqref{eq:opt2b}, such that $j' \leq k/2$ and $p^* \leq p' \leq k-1$. 

The inequality 
\begin{equation}\label{eq_aux4}
\frac{p \cdot \frac{k}{2}}{(p-1)(k-1)} \leq  \frac{p'}{p'-1}.
\end{equation}
holds because the left-hand side is at most $k/(k-1)$, while the right-hand side is greater than this, as $p' \leq k-1$. We conclude that
$$
L(k,s,p^*,k-1)=\frac{2p^* (k-1)}{(k-1) (p^* -1)} \stackrel{\eqref{eq_aux4}}{\leq} \frac{2p'(k-1)}{\frac{k}{2}\cdot (p'-1)} \leq \frac{2p'(k-1)}{j'\cdot (p'-1)}=L(k,s,p',j').
$$
Thus, to compute $L_{opt}(k,s)$, it remains to find the right value of $p^*$. 

For $j=k-1$ and $p\geq 2$, we have
$$
(k-1) \binom{p}{2} \geq \frac{k(p-1)}{2},
$$
hence in this case
$$
b(k,p,k-1) = \left\lfloor \frac{k}{p} \right\rfloor \binom{p}{2} +\binom{k-\lfloor k/p \rfloor p}{2} \leq \left\lfloor\frac{k(p-1)}{2}\right\rfloor.
$$
 This means that  (\ref{eq:opt2b}) becomes
\begin{equation}\label{eq_aux2}
\left\lfloor \frac{k}{p} \right\rfloor \binom{p}{2} +\binom{k-\lfloor k/p \rfloor p}{2}  \leq \binom{k}{2} - s +2
\end{equation}
and that 
$$L_{opt}(k,s)=L(k,s,p,k-1)=1+\frac{2p^*}{p^*-1}$$
where $p^*$ is the largest $p$ satisfying~\eqref{eq_aux2}.

On the one hand,
$$L(k,s,p^\ast,k-1) = 1+ \frac{2p^\ast}{p^\ast-1} > 3$$
for any $p^\ast \geq 2$.
On the other hand, we have
$$
L_{opt}(k,s) \leq L(k,s,2,k-1)=5.
$$
 This establishes part (a) of our claim, and finishes the proof of Theorem~\ref{stability_thm}.
\end{proof}

\section{Proof of Theorem~\ref{main_thm}}\label{sec_main}

To prove Theorem~\ref{main_thm}, we shall use the following special case of an auxiliary result~\cite{B?HLN}, whose proof uses tools as in~\cite[Theorem~1.1]{BHS17}.
\begin{theorem}\label{thm_atleast2}
Let  $n,r,k \geq 2$ with $2 \leq s \leq \binom{k}{2}$ be integers. If there exists an $(r, \P)$-extremal graph on $n$ vertices that is not complete multipartite, then there exist at least two non-isomorphic $(r,\P)$-extremal complete multipartite graphs on $n$ vertices. 
\end{theorem}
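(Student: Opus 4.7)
The plan is to adapt the symmetrization strategy of \cite[Theorem~1.1]{BHS17} to the present multi-pattern setting and then exploit the non-multipartite structure of $G$ to branch the argument into two non-isomorphic complete multipartite extremal graphs. Let $G$ be an $(r,\P)$-extremal graph on $n$ vertices that is not complete multipartite. Since complete multipartiteness is equivalent to transitivity of non-adjacency, one may pick three vertices $u, v, w \in V(G)$ with $uv, vw \notin E(G)$ and $uw \in E(G)$.

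For non-adjacent $x, y \in V(G)$, let $G^{x \to y}$ denote the graph obtained from $G$ by redefining $N(y) := N_G(x)$ (keeping $xy \notin E$), so that $y$ becomes an independent twin of $x$. The first step is to establish the Cauchy--Schwarz-type symmetrization inequality
$$|\mathcal{C}_{r,\P}(G^{x \to y})| \cdot |\mathcal{C}_{r,\P}(G^{y \to x})| \geq |\mathcal{C}_{r,\P}(G)|^2,$$
following the argument of \cite{BHS17}: partition the $\P$-free $r$-colorings of $G$ by their restrictions to edges outside $\{x, y\}$, and within each class apply Cauchy--Schwarz to the numbers of valid local extensions at $x$ and at $y$. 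The extension to the family of patterns $\P$ is essentially automatic because the constraint "no $K_k$ uses at least $s$ colors" is local to each copy of $K_k$; hence valid local extensions at $x$ combine independently with valid local extensions at $y$ whenever no forbidden $K_k$ uses both vertices, which is precisely the content needed for Cauchy--Schwarz.

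Applying the inequality to the pair $(u, v)$, extremality of $G$ forces equality, so that $G_1 := G^{u \to v}$ and $G_2 := G^{v \to u}$ are both $(r,\P)$-extremal. Since $uw \in E(G)$ and $vw \notin E(G)$, the vertex $w$ is adjacent to both $u$ and $v$ in $G_1$ (because $v$ inherits $u$'s neighborhood) but to neither in $G_2$. Iterating the symmetrization on each of $G_1$ and $G_2$, each step strictly enlarges the set of independent-twin pairs, so the process terminates at complete multipartite $(r,\P)$-extremal graphs $G_1^*$ and $G_2^*$; this reproduces the existence conclusion of \cite[Theorem~1.1]{BHS17}.

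The main obstacle is to verify that $G_1^* \not\cong G_2^*$. The key observation is that the relation of being independent twins is preserved under further symmetrization, so in $G_1^*$ the partition class containing $u$ and $v$ is disjoint from the class containing $w$ (they are adjacent), while in $G_2^*$ some single partition class contains all three of $u, v, w$ (they are pairwise non-adjacent in $G_2$ and this persists). By scheduling the subsequent symmetrization steps in each branch so that pairs involving $w$ are handled last, the structural asymmetry at $w$ propagates to the final multisets of part sizes: the part containing $u, v$ in $G_1^*$ cannot contain $w$, whereas the part containing $u, v$ in $G_2^*$ does contain $w$. Since a complete multipartite graph on $n$ vertices is determined up to isomorphism by its multiset of part sizes, this suffices to conclude $G_1^* \not\cong G_2^*$, yielding the two desired complete multipartite $(r,\P)$-extremal graphs.
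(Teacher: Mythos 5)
The paper does not actually prove Theorem~\ref{thm_atleast2}; it is quoted from the thesis~\cite{B?HLN}, so there is no in-text proof to compare against. Your first step is nevertheless sound and is certainly part of the intended toolkit: since $xy\notin E(G)$ no copy of $K_k$ contains both $x$ and $y$, so for each $\P$-free coloring $c_0$ of $G-x-y$ the numbers $A(c_0)$, $B(c_0)$ of admissible extensions at $x$ and at $y$ multiply, and Cauchy--Schwarz gives $|\mathcal{C}_{r,\P}(G^{x\to y})|\cdot|\mathcal{C}_{r,\P}(G^{y\to x})|\ge|\mathcal{C}_{r,\P}(G)|^2$; extremality then forces both symmetrized graphs to be extremal. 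Two of your later steps, however, do not hold as written. First, the termination claim is false: the set of independent-twin pairs is \emph{not} monotone under symmetrization, since replacing $N(y)$ by $N(x)$ destroys the twin relation between $y$ and any of its former twins, and it also changes the neighborhoods of all vertices in $N(x)\triangle N(y)$. Producing \emph{some} complete multipartite extremal graph requires a genuine potential/tie-breaking argument (this is the content of~\cite[Theorem~1.1]{BHS17}), not the monotonicity you assert.

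The fatal gap is the non-isomorphism step. Even granting that the two branches terminate in complete multipartite extremal graphs $G_1^\ast$ and $G_2^\ast$ with $u,v,w$ in the configurations you describe, this tells you nothing about the multisets of part sizes, which are the only isomorphism invariant of a complete multipartite graph: both branches could perfectly well end at $T_{k-1}(n)$ with $u,v,w$ distributed differently among the parts. So the conclusion "this suffices to conclude $G_1^\ast\not\cong G_2^\ast$" is a non sequitur. Moreover, the premise itself is unjustified: your "schedule pairs involving $w$ last" does not protect the configuration, because a later symmetrization of a pair $\{u,z\}$ or $\{v,z\}$ with $z\ne w$ (or of any pair meeting $N(w)$) can change the adjacency between $\{u,v\}$ and $w$ even though $w$ is not in the pair being symmetrized. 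A correct proof must extract a genuinely invariant distinction between the two branches --- for instance one based on edge counts or degree data that survives the remaining symmetrizations --- and nothing in the proposal supplies this.
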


We now show that, for $k \geq 4$, $2 \leq s \leq \binom{k}{2}$, $r>r_0(k,s)$ and sufficiently large $n$, there is actually a single $(r, \P)$-extremal graph on $n$ vertices, the Tur\'{a}n graph $T_{k-1}(n)$. 
\begin{proof}[Proof of Theorem~\ref{main_thm}]
Let $k \geq 4$ and $ 2 \leq s \leq \binom{k}{2}$ be integers.   Fix $r \geq r_{0}(k,s)$, with $r_0$ defined in~\eqref{def_r0},~\eqref{def_r1} and~\eqref{def_r2} for $s \leq s_0(k)$, $s_0(k)<s\leq s_1(k)$ and $s>s_1(k)$, respectively.  Consider a constant $0<\alpha \leq 3/4$ such that
\begin{equation}\label{def_alpha}
2^{2 H(\alpha)} \cdot (r-1) < r,
\end{equation}
and fix $\delta>0$ such that
\begin{equation}\label{def_delta}
\delta<\frac{1}{2(k-1)^8} \textrm{ and }r^{\delta} < \left[\frac{r}{(r-1)\cdot 2^{2 H(\alpha)}}\right]^{\frac{\alpha^{2(k-1)}}{r^{2}(k-1)^{6}}}.
\end{equation}
Let $n_0=n_0(r,k,s)$ from Theorem~\ref{stability_thm}. We shall further assume that $n_0$ is large enough so that all the inequalities marked with $n \gg 0$ are satisfied. Fix $n_1 \geq n_0^2$.

To reach a contradiction, suppose that there is an $n$-vertex graph $G=(V,E)$ that is $(r,K_k,\geq s)$-extremal, but $G \neq T_{k-1}(n)$. We may assume that $G$ is a complete multipartite graph (if it is not, replace it by an $(r,K_k,\geq s)$-extremal graph that is complete multipartite and different from $T_{k-1}(n)$, which exists by Theorem~\ref{thm_atleast2}). Let $V=V_1' \cup \cdots \cup V_p'$ be the multipartition of $G$, where $p \geq k$.

Let $V=V_1 \cup \cdots \cup V_{k-1}$ be a partition of the vertex set of $G$ such that $\sum_{i=1}^{k-1} e(V_i)$ is minimized, so that
$$\sum_{i=1}^{k-1} e(V_i) \leq \delta n^2$$ 
by Theorem~\ref{stability_thm}. The minimality of this partition ensures that, if $v \in V_i$, then $|V_j \cap N(v)| \geq |V_i \cap N(v)|$, for every $j \in [k-1]$, where $N(v)$ denotes the set of neighbors of $v$. Moreover, by Proposition~\ref{prop:prop1}, we must have
$$\left| |V_{i}|-\dfrac{n}{k-1}\right| < \sqrt{2 \delta} \ n.$$
Finally, given that $p > k-1$, there must be an edge $\{x,y\} \in E$ whose endpoints are contained in the same class of the partition; assume without loss of generality that they lie in $V_{k-1}$ and that $|N(x) \cap V_{k-1}| \geq |N(y) \cap V_{k-1}|$. Since $x$ and $y$ are in different classes of $V_1' \cup \cdots \cup V_p'$, any $z \in V_{k-1} -\{x,y\}$ must be adjacent to $x$ or $y$. We conclude that, for any $i \in [k-1]$,
$$|N(x) \cap V_{i}| \geq |N(x) \cap V_{k-1}| \geq \frac{|V_{k-1}|-2}{2}+1 \geq \dfrac{n}{2(k-1)}-\frac{\sqrt{2 \delta}}{2} \ n \stackrel{\eqref{def_delta}}{\geq} \frac{n}{(k-1)^3}.$$
For simplicity, we write $W_i=N(x) \cap V_i$ for $i \in [k-1]$. 

We shall consider the cases $ 2 \leq s \leq \binom{k-1}{2}+1 $ and $s > \binom{k-1}{2}+1$ separately.

\vspace{5pt}

\noindent \textbf{Case 1.} Assume that $ 2 \leq s \leq \binom{k-1}{2}+1$.  Let $\mathcal{C}$ be the family of $\P$-free $r$-colorings of $G$. 

Fix a coloring $\widehat{G} \in \mathcal{C}$. For each $i \in [k-1]$ and each color $c \in [r]$, let $W^{\widehat{G}}_{i,c}$ be the set of vertices in $N(x) \cap V_i$ that are connected to $x$ by an edge of color $c$ in $\widehat{G}$. By the pigeonhole principle, for each $i \in [k-1]$, there must be a color $c_i \in [r]$ such that 
$$|W^{\widehat{G}}_{i,c_i}| \geq \frac{|W_i|}{r} \geq \frac{n}{r(k-1)^{3}}.$$

We say that color $c$ is rare with respect to a pair $\{i,j\} \in \binom{[k-1]}{2}$ if there exist subsets $X_{i} \subseteq W^{\widehat{G}}_{i,c_i}$ and $X_{j} \subseteq W^{\widehat{G}}_{j,c_j}$, where $|X_i| \geq \alpha^{k-1} |W^{\widehat{G}}_{i,c_i}|$ and $|X_j| \geq \alpha^{k-1} |W^{\widehat{G}}_{j,c_j}|$, for which the number of edges of color $c$ between $X_i$ and $X_j$ is less than $\alpha| X_{i} || X_{j} |$. Otherwise, $c$ is said to be abundant for the pair $\{i,j\}$. 

We claim that, for any fixed $\widehat{G}$, there must be a pair $\{i,j\}  \in \binom{[k-1]}{2}$ and a color $c \in [r]$ such that $c$ is rare with respect to $\{i,j\}$. To see why this is true, assume on the contrary that every color is abundant with respect to every pair. Since $r \geq s$ and $s \leq \binom{k-1}{2}+1$, we may choose $s-1$ colors in $[r] \setminus \{c_{k-1}\}$ and assign them arbitrarily to pairs $\{i,j\} \in \binom{k-1}{2}$ in a way that each pair is assigned a color and all colors appear. This leads to an edge-coloring $\widehat{K_{k-1}}$ of a copy of $K_{k-1}$ with vertex set $\{v_1,\ldots,v_{k-1}\}$ where exactly $s-1$ colors appear and they are all different from $c_{k-1}$. By Lemma~\ref{abund}, $\widehat{G}$ contains a copy of $\widehat{K_{k-1}}$ with vertex set $\{x_1,\ldots,x_{k-1}\}$ with the property that $x_i \in W^{\widehat{G}}_{i,c_i}$ for all $i \in [k-1]$. By construction, we see that $\widehat{G}[x,x_1,\ldots,x_{k-1}]$ induces a copy of $K_k$ where at least $s$ colors appear, the desired contradiction.

We are now ready to find an upper bound on $\mathcal{C}$. To do this, we shall bound the number of $\P$-free $r$-colorings that may be associated with a pair $(X_i,X_j)$ and color $c$ as above. There are $r$ choices for the color $c$ and at most $2^{2n}$ choices for the pair of sets $X_i,X_j$.
Once $c$ and the sets $X_i,X_j$ are fixed, we have at most 
\begin{equation*}
\binom{|X_{i}||X_{j}|}{\alpha|X_{i}||X_{j}|}(r-1)^{|X_{i}||X_{j}|}\stackrel{\eqref{eq:entropy1}}{<}2^{H(\alpha)|X_{i}||X_{j}|}(r-1)^{|X_{i}||X_{j}|}
\end{equation*}
ways to color the edges between $X_i$ and $X_j$. Note that $|X_i|,|X_j| \geq \frac{\alpha^{k-1} n}{r(k-1)^{3}}$ Assuming towards an upper bound that the at most $\ex (n, K_{k})+ \delta n^{2} - |X_{i}||X_{j}|$ remaining edges may be colored arbitrarily, we obtain
\begin{eqnarray}\label{eq_case1}
	|{\mathcal{C}}| &\leq & r\cdot 2^{2n} \cdot r^{\ex (n, K_{k})+ \delta n^{2} - |X_{i}||X_{j}|} \cdot 2^{H(\alpha)|X_{i}||X_{j}|}(r-1)^{|X_{i}||X_{j}|}\\
	&\leq&  \left( 2^{2 H(\alpha)} \cdot \dfrac{(r-1)}{r} \right)^{|X_{i}||X_{j}|} \cdot r^{\delta n^{2}} \cdot r^{\ex (n, K_{k})}  \nonumber  \\
	&\leq& \left( 2^{2H(\alpha)} \cdot \dfrac{(r-1)}{r} \right)^{\left(\frac{\alpha^{2k-2}}{r^{2}(k-1)^{6}}\right)n^{2}} \cdot r^{\delta n^{2}} \cdot r^{\ex(n, K_{k})}  \nonumber  \\
	& = & \gamma^{n^2} r^{\ex(n, K_{k})} < r^{\ex(n, K_{k})}  \nonumber 
\end{eqnarray}
where $\gamma<1$ is a constant\footnote{with respect to $n$.} by our choice of $\delta>0$ in~\eqref{def_delta}. This contradicts our choice of $G$.

\vspace{5pt}

\noindent \textbf{Case 2.} Now assume that $s> \binom{k-1}{2}+1$. Let $s'=s-\binom{k-1}{2}$, so that $2 \leq s'  \leq k-1$. To get a contradiction, we assume that the complete multipartite $(r,(K_k, \geq s))$-extremal graph $G$ has $r^{\ex(n,K_{k-1})+m}$ distinct $(K_k, \geq s)$-free colorings, where $m \geq 0$. We shall prove that the graph $G-x$ must have at least $r^{\ex (n-1, K_{k}) + m+1}$ such colorings. This conclusion will lead to the desired contradiction, as we could apply this argument iteratively until we obtain a graph $G'$  on $n_{0}$ vertices and  at least $ r^{\ex (n_{0}, K_{k}) + m+n-n_{0}}>r^{n^{2}_{0}}\geq r^{|E(G')|}.$

Let $\mathcal{C}$ be the family of $\P$-free $r$-colorings of $G$, and let $\mathcal{C}_1$ be the subfamily containing all colorings $\widehat{G}$ for which there is a choice of distinct indices $i_1, \ldots, i_{s'} \in [k-1]$ and distinct colors $ c_{i_{1}}, \ldots, c_{i_{s'}} \in [r] $ such that, for each $p \in [s']$, the set $ W^{\widehat{G}}_{i_p,c_{i_p}}\subseteq V_{p} \cap N(x)$ of neighbors of $x$ in $V_p$ through edges of color $c_{i_p}$ (with respect to $\widehat{G}$) satisfies $ |W^{\widehat{G}}_{i_{p},c_{i_p}}|\geq n / [r(k-1)^{3}] $. For any $i \in [k-1] \setminus \{i_1,\ldots,i_{s'}\}$, we fix an arbitrary color $c_i$ such that the set $W_{i,c_i}^{\widehat{G}}$ of neighbors of $x$ in $V_i$ through edges of color $c_{i}$ satisfies $ |W^{\widehat{G}}_{i,c_i}|\geq n / [r(k-1)^{3}]$ (this color exists by the pigeonhole principle). 

As in Case 1, we say that color $c$ is rare with respect to a pair $\{i,j\} \in \binom{[k-1]}{2}$ if there exist subsets $X_{i} \subseteq W^{\widehat{G}}_{i,c_i}$ and $X_{j,c_j} \subseteq W^{\widehat{G}}_{j}$, where $|X_i| \geq \alpha^{k-1} |W^{\widehat{G}}_{i,c_i}|$ and $|X_j| \geq \alpha^{k-1} |W^{\widehat{G}}_{j,c_j}|$, for which the number of edges of color $c$ between $X_i$ and $X_j$ is less than $\alpha| X_{i} || X_{j} |$. Otherwise, $c$ is said to be abundant for the pair $\{i,j\}$. We claim that, for any fixed $\widehat{G}$, there must be a pair $\{i,j\}  \in \binom{[k-1]}{2}$ and a color $c \in [r]$ such that $c$ is rare with respect to $\{i,j\}$. If this was not the case, we would be able to fix $s-s'=\binom{k-1}{2}$ distinct colors in $[r] \setminus \{c_{i_{1}}, \ldots, c_{i_{s'}}\}$ to be assigned to the edges of a copy of $K_{k-1}$, which, with Lemma~\ref{abund}, would lead to a contradiction. Using the arguments in~\eqref{eq_case1}, we conclude that $ |{\mathcal{C}_{1}}| \leq \gamma^{n^2} r^{\ex (n, K_{k})}$, where $\gamma<1$ is a constant.

As a consequence the family $\mathcal{C}_2=\mathcal{C}\setminus \mathcal{C}_1$ contains at least $r^{\ex (n, K_{k}) + m}-\gamma^{n^2} r^{\ex (n, K_{k})} \geq r^{\ex (n, K_{k}) + m-1}$ colorings. Fix a coloring $\widehat{G} \in \mathcal{C}_2$. We define a bipartite graph $B^{\widehat{G}}$ with bipartition $[k-1] \cup [r]$ such that $\{i,c\}$ is an edge if the set $ W^{\widehat{G}}_{i,c}\subseteq V_{i} \cap N(x)$ of neighbors of $x$ in $V_i$ through edges of color $c_{i}$ satisfies $ |W^{\widehat{G}}_{i,c}|\geq n / [r(k-1)^{3}]$. Note that $\widehat{G}$ lies in $\mathcal{C}_1$ if and only if $B^{\widehat{G}}$ contains a matching of size $s'$. Since $\widehat{G} \notin \mathcal{C}_1$, the following holds by Hall's Theorem. There is an integer $h$, where $1 \leq h \leq s'-1$, a set of distinct indices $I=\{i_1,\ldots,i_{h+1}\} \subseteq [k-1]$ and a set $C \subset [r]$ of colors, where $|C| \leq h$, such that $ |W^{\widehat{G}}_{i_p,c}|\geq n / [r(k-1)^{3}]$ only if $c \in C$. We shall associate each $\widehat{G} \in \mathcal{C}_2$ with such a triple $(h,I,C)$. Note that we may suppose that $|C|=h$ by adding arbitrary new elements to $C$ if necessary.

Let $\phi(\mathcal{C}_2)$ be the family that contains the projection of each element of $\mathcal{C}_2$ onto the edges incident with $x$. In other words, $\phi(\mathcal{C}_2)$ contains all $r$-edge colorings of the edges incident with $x$ that may be extended to a coloring in $\mathcal{C}_2$. We are now ready to find an upper bound on the cardinality of $\phi(\mathcal{C}_2)$. To do this, we shall bound the number of colorings that may be associated with a triple $(h,I,C)$ as above. Given $h$, there are $\binom{k-1}{h+1} \leq 2^{k-1}$ ways to fix $I=\{i_1,\ldots,i_{h+1}\}$ and $\binom{r}{h} \leq 2^r$ ways to choose $C$. Once $(h,I,C)$ is fixed, an upper bound on the number of ways to color the edges between $x$ and $V_{i}$, where $i \in I$, is
$$\binom{|V_i|}{n / [r(k-1)^{3}]}^r h^{|V_i|} \stackrel{\eqref{eq:entropy1}}{\leq} 2^{H\left(\frac{1}{r(k-1)^{3}}n\right)}  h^{|V_i|} \stackrel{\eqref{eq:entropy2}}{\leq} \left(r(k-1)^{3}\right)^{\frac{2n}{(k-1)^3}} h^{(\frac{1}{k-1}+\sqrt{2 \delta})n}.$$
The first term in the product is an upper bound on the number of ways to color edges using each of the at most $r$ colors that are used at most $n / [r(k-1)^{3}]$ times, the second term is an upper bound on the number of ways to color edges with the remaining colors, which lie in $C$. For $i \notin I$, the number of ways to color the edges between $x$ and $V_{i}$ will be bounded with the trivial bound $r^{|V_i|}\leq r^{(\frac{1}{k-1}+\sqrt{2 \delta})n}$.

Combining this information, we obtain
\begin{eqnarray} \label{Nx}
|\phi(\mathcal{C}_2)|
& \leq &  \sum_{h=1}^{s'-1} 2^{k-1} \cdot 2^r  \cdot [r(k-1)^{3}]^{\frac{2(h+1)n}{(k-1)^{3}}} \cdot (h^{h+1}\cdot r^{k-h-2})^{(\frac{1}{k-1}+\sqrt{2 \delta})n}
\end{eqnarray}
Consider the function $f(h)=h^{h+1}r^{-h}$. With $1 \leq h \leq k-2$, since $r \stackrel{(\ref{eq:4})}{>} (k-1)^4 > e(k+1)$ for $k \geq 4$,
 we have
$$\frac{f(h+1)}{f(h)}=\left(1+\frac{1}{h}\right)^h\cdot \frac{(h+1)^2}{rh} \leq \frac{e}{r} \left(h+2+\frac{1}{h} \right)<1.$$
This means that $f(h)$ is maximized for $h=1$. On the other hand, it is clear that $ [r(k-1)^{3}]^{\frac{2(h+1)n}{(k-1)^{3}}}\leq [r(k-1)^{3}]^{\frac{2s'n}{(k-1)^{3}}}$. Finally, the minimum degree $\delta_{k-1}(n)$ of a vertex in the Tur\'an graph $T_{k-1}(n)$ satisfies $\delta_{k-1}(n) \geq \frac{k-2}{k-1}n-1$. Using this in $ (\ref{Nx})$ leads to 
\begin{eqnarray}\label{Nx2}
|\phi(\mathcal{C}_2)| &\leq &s' \cdot 2^{k-1} \cdot 2^{r}\cdot [r(k-1)^{3}]^{\frac{2 s' n}{(k-1)^{3}}} \cdot r^{(k-3)(\frac{1}{k-1}+\sqrt{2 \delta})n}  \nonumber \\ 
&\stackrel{\eqref{def_delta}}{\leq}  & s' \cdot 2^{k-1} \cdot 2^{r} \cdot [r(k-1)^{3}]^{\frac{2 s' n}{(k-1)^{3}}} \cdot r^{(k-3)\left(\frac{1}{k-1}+\frac{1}{(k-1)^{4}}\right)n} \cdot r^{\delta_{k-1}(n)-\frac{k-2}{k-1}n+1}  \nonumber \\
& = & s' \cdot 2^{k-1} \cdot r \cdot 2^{r} \cdot \left[\frac{(k-1)^{6 s'}}{ r^{(k-1)^{2} - 2s'-\frac{k-3}{k-1} }}\right]^{\frac{n}{(k-1)^3}} \cdot r^{\delta_{k-1}(n)}.	
\end{eqnarray}
We claim that 
\begin{equation}\label{claimed_bound}
\frac{(k-1)^{6 s'}}{ r^{(k-1)^{2} - 2s'-\frac{k-3}{k-1} }}<1.
\end{equation}
Before proving this, we argue that it leads to the desired result. Using this, inequality~\eqref{Nx2} implies that
$$|\phi(\mathcal{C}_2)|  \leq r^{\delta_{k-1}(n)-2}$$
for sufficiently large $n$, and therefore
$$ |\mathcal{C}_{r,\P}(G-x)| \geq \frac{|\mathcal{C}_2|}{|\phi(\mathcal{C}_2)|} \geq \frac{r^{\ex (n, K_{k}) + m-1}}{r^{\delta_{k-1}(n)-2}}=r^{\ex (n-1, K_{k}) + m+1},$$
as required.

To conclude the proof, we show that~\eqref{claimed_bound} holds. We start with the case $k \geq 5$, where we first show that $r_0(k,s)>(k-1)^4$ when $s \geq \binom{k-1}{2}+2$. Indeed, by the definition of $r_0(s,t)$ (for $s >s_0$) and by the lower bounds on $L_{opt}(k,s)$ given in Claim~\ref{claim:lopt}, 
we deduce that
\begin{eqnarray}\label{eq_LL}
r_0(k,s) &>& (s-A(k,2)-1)^3  \cdot (s-A(k,2))^{2-\frac{k-1}{k-2}} \cdot (s-1)^{\frac{k-1}{k-2}} \nonumber \\
& \geq & (s-A(k,2)-1)^5 \cdot \left(\binom{k-1}{2}+1 \right) \nonumber \\
& > & \left(\binom{k-1}{2}-\left(\binom{k}{2}-\left\lfloor\frac{k}{2}\right\rfloor  \left\lceil\frac{k}{2} \right\rceil \right)+1\right)^5  \cdot \binom{k-1}{2} \nonumber \\
&\geq& \left(\left\lfloor\frac{k}{2}\right\rfloor  \left\lceil\frac{k}{2} \right\rceil -k+2 \right)^5 \cdot \binom{k-1}{2}.
\end{eqnarray}
It turns out that 
$$\left\lfloor\frac{k}{2}\right\rfloor \left\lceil\frac{k}{2} \right\rceil -k+2 \geq \frac{k^2-1}{4} -k+2 \geq k-2.$$
Therefore, the inequality~\eqref{eq_LL} leads to
\begin{eqnarray} \label{eq:4}
r_0(k,s) > \frac{(k-2)^{6}(k-1)}{2} > (k-1)^4,
\end{eqnarray}
where the last part may be easily proved by induction (for $k \geq 4$).

Coming back to~\eqref{claimed_bound}, we first consider the case $k\geq 5$. First note that 
$$\frac{(k-1)^{6 s'}}{ r^{(k-1)^{2} - 2s'-\frac{k-3}{k-1} }} \leq \frac{\left((k-1)^{4}\right)^{3s'/2}}{ r^{(k-1)^{2} - 2s'-1}}.$$
Since $r > (k-1)^{4}$, inequality~\eqref{claimed_bound} holds if we show that $3 s'/2 \leq (k-1)^{2} - 2s'-1$, which is equivalent to
$$\frac{7s'}{2} \leq k^2-2k.$$
The left-hand side of this inequality is at most $7(k-1)/2$, and it is indeed the case that $7(k-1) \leq 2k^2-4k$ for all $ k\geq 5$.

In the case $k=4$, inequality~\eqref{claimed_bound} holds if and only if
\begin{equation}\label{claimed_bound4}
3^{6s'} \leq r^{9-2s'-\frac{1}{3}}.
\end{equation}
Since we are in Case 2, we need to consider the cases $s=5$ (so $s'=2$) and $s=6$ (so $s'=3$). In the case $s=5$, the inequality holds if $3^{12} \leq r^{14/3}$, which holds for $r \geq 17$. According to Table~\ref{values}, $r_0(4,5)=222$. In the case $s=6$, inequality~\eqref{claimed_bound4} holds if $3^{18} \leq r^{8/3}$, holds for $r\geq 1662$. According to Table~\ref{values}, $r_0(4,6)=5434$. This concludes the proof of Theorem~\ref{main_thm}.
\end{proof}

\section{Final remarks and open problems}

In this paper, given integers $r \geq 2$, $n\geq k \geq 3$ and $2 \leq s \leq \binom{k}{2}$, we were interested in characterizing $n$-vertex graphs $G$ for which the number of $r$-edge-colorings with no copy of $K_k$ colored with $s$ or more colors satisfies
\begin{equation}\label{eq_remarks}
|\mathcal{C}_{r,\P}(G)|=c_{r,\P}(n) = \max\left\{\, |\mathcal{C}_{r,\P}(G')| \colon |V(G')| = n \, \right\}.
\end{equation}
This problem is a common generalization of the Tur\'{a}n problem and of the rainbow Erd\H{o}s-Rothschild problem, i.e.,  the problem of finding $n$-vertex graphs $G$ with the largest number of $r$-edge-colorings with no rainbow copy of $K_k$.

More precisely, we have found functions $r_1(k,s)$ and $r_0(k,s)$ such that 
\begin{itemize}
\item[(a)] If $r \geq r_0(k,s)$ and $n$ is sufficiently large, then $|\mathcal{C}_{r,\P}(G)|=c_{r,\P}(n)$ if and only if $G$ is isomorphic to $T_{k-1}(n)$.

\item[(b)] If $r \leq r_1(k,s)$, then $|\mathcal{C}_{r,\P}(K_n)| > |\mathcal{C}_{r,\P}(T_{k-1}(n))|$.
\end{itemize}
We should mention that reference~\cite{rainbow_kn} gives a function $r_0'(k)$ such that the statement of part (a) holds for all $r \geq r_0'(k) \geq \binom{k}{2}^{8k-4}$ when $s=\binom{k}{2}$. With Lemma~\ref{lemma_simple}, this implies the \emph{existence} of a function $ r_0'(k,s)$ such that (a) is satisfied. The results of the current paper are such that $r_0(k,\binom{k}{2})\leq  \left(k^2/4\right)^{4k} <r_0'(k)$, that $r_0(k,s) \leq (s-1)^2$ for $s\leq s_0(k)=\binom{k}{2}-\left\lfloor \frac{k}{2} \right\rfloor \cdot  \left\lceil \frac{k}{2} \right\rceil+2 $ and that $r_0(k,s) \leq (s-1)^7$ for $s\leq s_1(k)=\binom{k}{2} - \left\lfloor \frac{k}{2} \right\rfloor +2$.

For general values of $k$ and $s$, there is a significant gap between the functions $r_0$ and $r_1$ in (a) and (b), but it turns out that $r_0(k,s)=r_1(k,s)+1$ for all pairs $(k,3)$ with $k \geq 4$ and for infinitely many other pairs $(k,s)$. Computing the values of $r_0$ and $r_1$, we see that this happens for the following pairs if $(k,s)$ where $4 \leq k <10$ and $s \geq 4$:
$$(k,s) \in \{(5,4),(7,4),(7,5),(8,4),(8,5),(9,3),(9,4),(9,6)\}.$$ 
In fact, the following holds.

\begin{proposition}\label{prop1}
Let $s \geq 3$ be an integer. There exists $k_0$ such that, for all $k \geq k_0$, the pair $(k,s)$ satisfies $r_0(k,s)=s$ and $r_1(k,s)=s-1$.
\end{proposition}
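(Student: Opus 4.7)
Fix $s \geq 3$. I will show that, for all $k$ sufficiently large in terms of $s$, the quantities defining $r_1(k,s)$ and $r_0(k,s)$ each fall in the open interval $(s-1, s)$, which by construction forces $r_1(k,s) = s-1$ and $r_0(k,s) = s$.

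The easier half is $r_1(k,s) = \lceil (s-1)^{(k-1)/(k-2)} - 1 \rceil$. Since $(s-1)^{(k-1)/(k-2)} = (s-1)\cdot(s-1)^{1/(k-2)}$ is strictly greater than $s-1$ and tends to $s-1$ as $k \to \infty$, it eventually lies in $(s-1, s)$, so that $r_1(k,s)-1 \in (s-2, s-1)$ and therefore $r_1(k,s) = s-1$ for all large $k$.

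For $r_0(k,s)$, observe first that $s_0(k) = \binom{k}{2} - \lfloor k/2\rfloor\lceil k/2\rceil + 2$ is of order $k^2/4$, so definition~\eqref{def_r0} governs $r_0(k,s)$ once $k$ is large. The key calculation is the asymptotic behaviour of $A(k, k-i)$ for fixed $i$. Direct substitution into~\eqref{def_A} shows that whenever $k \geq 2i$ one has $\lfloor k/(k-i)\rfloor \in \{1, 2\}$, and in both cases the formula collapses to $A(k, k-i) = i$. Consequently, once $k > 2(s-2)$, the least $i$ with $A(k,k-i) \geq s-2$ is $i^{\ast} = s-2$, and $s - A(k, k-i+1) - 1 = s-i$ for each $i \in \{2, \ldots, s-2\}$ (an empty product when $s = 3$). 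Thus~\eqref{def_r0} becomes
$$E(k,s) = (s-1)^{(k-1)/(k-2)}\prod_{i=2}^{s-2} (s-i)^{1/((k-i-1)(k-i))}.$$

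To conclude, I would observe that every factor on the right is at least $1$, and the first is strictly greater than $s-1$, so $E(k,s) > s-1$; meanwhile the first factor tends to $s-1$ and each remaining factor tends to $1$ (its exponent tends to $0$), so $E(k,s) \to s-1$ from above as $k \to \infty$. Hence $E(k,s) \in (s-1, s)$ for all sufficiently large $k$, and $r_0(k,s)$, the least integer strictly greater than $E(k,s)$, equals $s$, as claimed. The only nontrivial step is the closed-form computation of $A(k, k-i)$; but this is just a direct case check against~\eqref{def_A}, so I do not anticipate any serious technical obstacle.
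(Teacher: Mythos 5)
Your proof is correct and follows essentially the same route as the paper: verify $A(k,k-i)=i$ for $k\geq 2i$ so that $i^\ast=s-2$, note $s\leq s_0(k)$ for large $k$, and show both $(s-1)^{(k-1)/(k-2)}$ and the product~\eqref{def_r0} lie strictly between $s-1$ and $s$ once $k$ is large enough. The paper makes the limiting step effective by telescoping the exponents of the product into $(s-2)^{(s-3)/((k-s+1)(k-2))}$ and stating explicit thresholds $k_1,k_2,k_3$, but the underlying argument is the same as yours.
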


\begin{proof}
Let $s \geq 3$. First notice that $(s-1)^{1/(k-2)}$ is not an integer for $k > 2 + \log_2(s-1)$. Fix $k_1$ such that $(s-1)^{1/(k-2)}$ is not an integer and $(s-1)^{\frac{k-1}{k-2}}<s-\frac{1}{2}$ for all integers $k \geq k_1$. In particular, we have 
$$s-1=\lceil (s-1)^{\frac{k-1}{k-2}}-1\rceil = r_1(k,s).$$ 

Next, let $k_2$ be such that, for all integers $k \geq k_2$, we have $s \leq s_0(k)$ and $i^\ast=s-2$, see~(\ref{def_r0}). In particular, $r_0(k,s)$ is the least integer greater than
$$(s-1)^{\frac{k-1}{k-2}} \cdot (s- 2)^{ \frac{1}{(k-3)(k-2)}} \cdot  (s- 3)^{ \frac{1}{(k-4)(k-3)} }\cdots 2^{\frac{1}{(k-s+1)(k-s+2)}} \leq (s-1)^{\frac{k-1}{k-2}} \cdot (s- 2)^{ \frac{s-3}{(k-s+1)(k-2)}}.$$
We may choose $k_3$ such that, for all $k \geq k_3$, it holds that
$$\left(1+\frac{1}{2(s-1)^{3/2}} \right)^{(k-s+1)(k-2)} > (s-2)^{s-3}.$$
This implies that, for all $k \geq \max\{k_1,k_2,k_3\}$, it is
$$(s-1)^{\frac{k-1}{k-2}}(s- 2)^{ \frac{s-3}{(k-s+1)(k-2)}} < (s-1)^{\frac{k-1}{k-2}}\left(1+\frac{1}{2(s-1)^{3/2}} \right) \leq (s-1)^{\frac{k-1}{k-2}}+\frac{1}{2}<s.$$

As a consequence, we have 
\begin{eqnarray*}s \leq r_0(k,s) &\leq& \left\lfloor (s-1)^{\frac{k-1}{k-2}}(s- 2)^{ \frac{s-3}{(k-s+1)(k-2)}} \right\rfloor+1< s+1,
\end{eqnarray*}
so that  $r_0(k,s)=s$.
\end{proof}

By definition, it is obvious that, if $r \leq s-1$, then $|\mathcal{C}_{r,\P}(G)|=c_{r,\P}(n)$ if and only if $G=K_n$. This implies that, for pairs $(k,s)$ such that $s=3$ or Proposition~\ref{prop1} is satisfied, the functions $r_1$ and $r_0$ are best possible in (a) and (b)\footnote{For sufficiently large $n$.}, and $K_n$ is $(r,\P)$-extremal for $r \leq 2$ and $T_{k-1}(n)$ is $(r,\P)$-extremal for $r \geq 3$.

We propose the following questions.
\begin{question}
We say that a pair $(k,s)$ such that $k\geq 3$ and $3 \leq s \leq \binom{k}{2}$ satisfies \emph{Property A} if there exists $r^*=r^*(k,s)$ such that, for any fixed $r \geq 2$, there exists $n_0$ for which the following holds: $G$ is an $(r,\P)$-extremal $n$-vertex graph with $n \geq n_0$ if and only if $r<r^*$ and $G=K_n$, or $r \geq r^*$ and $G=T_{k-1}(n)$. Is it true that all such pairs $(k,s)$ satisfy Property A?
\end{question}
The above paragraph ensures that all pairs that fulfil Proposition~\ref{prop1} also satisfy Property A. Moreover, previous work for triangles~\cite{BL19,lagos} implies that the pair $(3,s)$ satisfies Property A for any $s$, while the current paper shows that the pair $(k,3)$ satisfies Property A for any $k$. 

\begin{question}
Is it true that, for any pair $(k,s)$ such that $k\geq 3$ and $3 \leq s \leq \binom{k}{2}$ and any $r \geq s$, it holds that 
\begin{equation}\label{eq_kn}
|\mathcal{C}_{r,\P}(K_n)|=\left(\binom{r}{s-1}+o_n(1) \right) (s-1)^{\binom{n}{2}}?
\end{equation}
\end{question}
Equation~\eqref{eq_kn} holds for all $r \geq 3$ in the case $k=3$, see~\cite{BL19,BBH20} (the second reference actually shows that this relation holds for some functions $r=r(n)$ such that $r \rightarrow \infty$ as $n \rightarrow \infty$). We observe that, if a pair $(k,s)$ satisfies Property A and equation~\eqref{eq_kn} holds for $(k,s)$ and any $r > r_1(k,s)$, then we must have $r^*(k,s)= r_1(k,s)+1$.

In a different direction, given integers $r \geq 2$, $n\geq k \geq 3$ and $1 \leq s \leq \binom{k}{2}$, it would be interesting to consider $r$-edge-colorings with no copy of $K_k$ colored with $s$ or \emph{less} colors, which would lead to a common generalization of the Tur\'{a}n problem and of the (monochromatic) Erd\H{o}s-Rothschild problem. The class of extremal configurations for large $n$ and $r$ will be much richer, as  $K_n$  does not admit any such coloring by Ramsey's Theorem and $T_{k-1}(n)$ admits fewer colorings than other constructions (see~\cite{ABKS,PSY16}).


\begin{thebibliography}{2}

\bibitem{ABKS}
N.~Alon, J.~Balogh, P.~Keevash  and B.~Sudakov,
\newblock The number of edge colorings with no monochromatic cliques,
\newblock \emph{J. London Math. Soc.} {\bf 70(2)} (2004), 273--288.

\bibitem{AY} 
N.~Alon and R.~Yuster, The number of orientations having no fixed tournament, \emph{Combinatorica} {\bf 26} (2006), 1--16.

\bibitem{Bal06} J.~Balogh,
\newblock A remark on the number of edge colorings of graphs,
\newblock \emph{European Journal of Combinatorics} {\bf 27} (2006), 565--573.

\bibitem{BL19} J.~Balogh and L.~Li, 
\newblock The typical structure of Gallai colorings and their extremal graphs,
\newblock \emph{SIAM Journal on Discrete Mathematics} {\bf 33(4)} (2019), 2416--2443.

\bibitem{BBH20} J.O. Bastos, F.S. Benevides, and J. Han, The number of Gallai $k$-colorings of complete graphs, \emph{Journal of Combinatorial Theory, Series B} {\bf 144} (2020), 1--13.

\bibitem{BHS17} F.S.~Benevides, C.~Hoppen and  R.M.~Sampaio,
\newblock Edge-colorings of graphs avoiding a prescribed coloring pattern,
\newblock \emph{Discrete Mathematics} {\bf 240(9)} (2017), 2143--2160.

\bibitem{Botleretal19} F. Botler, J. Corsten, A. Dankovics, N. Frankl, H. H\`{a}n, A. Jim\'{e}nez and J. Skokan,
\newblock Maximum number of triangle-free edge colourings with five and six colours,
\newblock \emph{Acta Mathematica Universitatis Comenianae} {\bf 88(3)} (2019), 495--499.

\bibitem{CGM20} L. Colucci, E. Gy\H{o}ri and A. Methuku, 
\newblock Edge colorings of graphs without monochromatic stars
\newblock \emph{arXiv:1903.04541}

\bibitem{Erd74}  
P.~Erd{\H{o}}s,
\newblock Some new applications of probability methods to combinatorial analysis and graph theory,
 \newblock \emph{Proc.\, of the Fifth Southeastern Conference on Combinatorics, Graph Theory and Computing} (1974), 39--51. 
 
 \bibitem{fu15} Z.~F\"{u}redi, A proof of the stability of extremal 
graphs, Simonovits's stability from Szemer\'edi's regularity,
\emph{Journal of Combinatorial Theory, Series B} {\bf 115} (2015), 66--71.
              
\bibitem{FS2014} Z.~F\"{u}redi and M.~Simonovits,
\newblock The history of degenerate (bipartite) extremal graph problems,
\newblock \emph{Erd\H{o}s Centennial}, Bolyai Society of Mathematics {\bf 25},  Springer Verlag (2013), 169--213.                 

\bibitem{HJ2018} H.~H\`{a}n and A.~Jim\'{e}nez, {Improved bound on the maximum number of clique-free colorings with two and three colors}, \emph{SIAM Journal on Discrete Mathematics} {\bf 32(2)} (2018), 1364--1368.

\bibitem{linear} C.~Hoppen, Y.~Kohayakawa and H.~Lefmann,
\newblock {Edge-colorings of graphs avoiding fixed monochromatic subgraphs with linear Tur\'{a}n number},
\newblock \emph{European Journal of Combinatorics} {\bf 35(1)} (2014), 354--373.

\bibitem{rbmx} C.~Hoppen and H.~Lefmann,
\newblock Edge-colorings avoiding a fixed matching with a prescribed color pattern,
\newblock \emph{European Journal of Combinatorics} {\bf 47} (2015),75--94.

\bibitem{lagos}  C.~Hoppen and H.~Lefmann, Remarks on an edge-coloring problem, Proceedings of the X Latin-American Algorithms, Graphs, and Optimization Symposium (LAGOS 2019), Electronic Notes in Theoretical Computer Science {\bf 346} (2019),  511--521.

\bibitem{rainbow_triangle} C.~Hoppen, H.~Lefmann and K.~Odermann,
\newblock On graphs with a large number of edge-colorings avoiding a rainbow triangle,
\newblock \emph{European Journal of Combinatorics} {\bf 66} (2017), 168--190.

\bibitem{rainbow_kn} C.~Hoppen, H.~Lefmann and K.~Odermann,
\newblock A rainbow Erd\H{o}s-Rothschild problem,
\newblock \emph{SIAM Journal on Discrete Mathematics}  {\bf 31} (2017), 2647--2674.

\bibitem{kosi} J.~Koml\'os and M.~Simonovits, Szemer\'edi's regularity lemma and its applications in graph theory, Combinatorics, Paul Erd{\H{o}}s is eighty, Vol. {\bf 2} (Keszthely, 1993), 295--352,  Bolyai Soc. Math. Stud., 2, J\'anos Bolyai Math. Soc., Budapest (1996), 295--352.

\bibitem{B?HLN} D. Nolibos, 
\newblock On extremal configurations of generalized Erd\H{o}s-Rothschild problems,
\newblock \emph{PhD thesis, preprint.}

\bibitem{PSY16} O.~Pikhurko, K.~Staden and Z.B.~Yilma,
\newblock The Erd\H{o}s-Rothschild problem on edge-colourings with forbidden monochromatic cliques,
\newblock \emph{Mathematical Proceedings of the Cambridge Philosophical Society} {\bf 163(2)} (2017), 341--356.

\bibitem{PY12} O.~Pikhurko and Z.B.~Yilma,
\newblock The maximum number of {$K\sb 3$}-free and
  {$K\sb 4$}-free edge 4-colorings,
\newblock \emph{J. London Math. Soc.} {\bf 85(2)} (2012), 593--615.      

\bibitem{simonovits} M.~Simonovits, \emph{A Method for Solving Extremal 
Problems in Graph Theory, Stability Problems}, Theory of Graphs (Proc. Colloq., Tihany, 1966), Academic Press, New York (1968), 279--319.

\bibitem{Sze78}
E.~Szemer{\'e}di, \emph{Regular partitions of graphs}, Probl\`emes
combinatoires et th\'eorie des graphes (Colloq. Internat. CNRS, Univ. Orsay,
Orsay, 1976), Colloq. Internat. CNRS, vol. {\bf 260}, CNRS, Paris (1978),
399--401.
     
\bibitem{turan} P.~Tur\'{a}n,
\newblock On an extremal problem in graph theory (in Hungarian), 
\newblock \emph{Matematikai \'{e}s Fizikai Lapok} {\bf 48} (1941), 436--452.

\bibitem{yuster}
R.~Yuster,
\newblock The number of edge colorings with no monochromatic triangle,
\newblock \emph{J. Graph Theory} {\bf  21} (1996), 441--452.

\end{thebibliography}
\end{document}